\documentclass[11pt]{amsart}
\usepackage[toc,page]{appendix}
\usepackage{amsmath,amssymb,latexsym}
\usepackage[small]{caption}
\usepackage{graphicx,color,mathrsfs,tikz}
\usepackage{subfigure,color}
\usepackage{cite}
\usepackage[colorlinks=true,urlcolor=blue,
citecolor=red,linkcolor=blue,linktocpage,pdfpagelabels,
bookmarksnumbered,bookmarksopen]{hyperref}
\usepackage[italian,english]{babel}
\usepackage{enumitem}
\usepackage[left=2.7cm,right=2.7cm,top=2.9cm,bottom=2.9cm]{geometry}
\usepackage[hyperpageref]{backref}

\usepackage[colorinlistoftodos,prependcaption]{todonotes}


\numberwithin{equation}{section}
\newtheorem{theorem}{Theorem}[section]
\newtheorem{proposition}[theorem]{Proposition}
\newtheorem{lemma}[theorem]{Lemma}

\newtheorem{corollary}[theorem]{Corollary}
\newtheorem{definition}[theorem]{Definition}
\theoremstyle{definition}

\newtheorem{remark}[theorem]{Remark}
\renewcommand{\epsilon}{\eps}

\newcommand{\HC}{\mathcal{HC}}

\newcommand{\C}{{\mathcal C}}

\newcommand{\R}{{\mathbb R}}

\newcommand{\dvg}{{\rm div}}

\newcommand{\eps}{\varepsilon}

\newcommand{\Tr}{{\rm Tr}}

\newcommand{\pnorm}[2][]{\if #1'' \left|#2\right|_p \else \left|#2\right|_{#1} \fi}

\renewcommand{\theta}{\vartheta}


\newcommand{\Rn}{{\mathbb R^{n}}}
\newcommand{\eqlab}[1]{\begin{equation}  \begin{aligned}#1 \end{aligned}\end{equation}} 
\newcommand{\bgs}[1]{\begin{equation*} \begin{aligned}#1\end{aligned}\end{equation*}} 
 \newcommand{\syslab}[2] []  {\begin{equation}#1  \left\{\begin{aligned}#2\end{aligned}\right.\end{equation}} 
  \newcommand{\sys}[2][]{\begin{equation*}#1  \left\{\begin{aligned}#2\end{aligned}\right.\end{equation*}}

\def\XXint#1#2#3{{\setbox0=\hbox{$#1{#2#3}{\int}$ }
\vcenter{\hbox{$#2#3$ }}\kern-.6\wd0}}

\title[Approximate convexity principles and applications]{Approximate convexity principles and \\
	applications to PDEs in convex domains}

\author{Claudia Bucur}
\author{Marco Squassina}

\address[C. Bucur]{Dipartimento di Matematica e Fisica \newline\indent
	Universit\`a Cattolica del Sacro Cuore \newline\indent
	Via dei Musei 41, I-25121 Brescia, Italy}
\email{claudia.bucur@aol.com}

\address[M.\ Squassina]{Dipartimento di Matematica e Fisica \newline\indent
	Universit\`a Cattolica del Sacro Cuore \newline\indent
	Via dei Musei 41, I-25121 Brescia, Italy}
\email{marco.squassina@unicatt.it}

\thanks{The authors are members
	of {\em Gruppo Nazionale per l'Analisi Ma\-te\-ma\-ti\-ca, la Probabilit\`a e le loro Applicazioni} (GNAMPA) 
	of the {\em Istituto Nazionale di Alta Matematica} (INdAM)}

\subjclass[2010]{46E35, 28D20, 82B10, 49A50}

\keywords{Elliptic PDEs, convexity, concavity, maximum principles}

\begin{document}

\begin{abstract}
	We obtain  approximate convexity principles for solutions to some classes of nonlinear elliptic partial differential equations in convex domains involving 
	approximately concave nonlinearities. Furthermore, we provide some applications to some meaningful special cases.
\end{abstract}
\maketitle

\section{Introduction}

Convexity properties of solutions to elliptic partial differential equations in convex domains are a fascinating subject. One of the first results in this direction goes back to the work of Brascamp and Lieb \cite{BL}  from 1976, where
they proved that the logarithm function applied to the first eigenfunction of the Laplace operator with zero Dirichlet
boundary conditions in a convex domain is concave. Notice that the first eigenfunction itself \emph{is not} 
concave in any domain (as it can be easily seen), thus considering a transformation (in this case, taking the logarithm) of the solution is necessary. Previously, in 1971, Makar-Limanov \cite{Makar} had proved 
that if $u$ is the positive solution to the torsion equation $\Delta u+1=0$ in the convex domain $\Omega$, then $\sqrt{u}$ is concave.
Later, at the beginning of the eighties, Korevaar \cite{kore,kore2} and Kennington \cite{kenn} were able to derive these results from general convexity principles (see also \cite{kaw,CS,CF}). 
Given a convex domain $\Omega\subset \Rn$ and a function $u:\bar\Omega\to\R$, 
these convexity principles are essentially maximum principles for the auxiliary function
\eqlab{\label{cu}
\mathcal C_u(y_1,y_3,\lambda):= 
u(\lambda y_1			+ (1-\lambda)y_3)
-\lambda u(y_1)-(1-\lambda)u(y_3),
}
for $y_1,y_3\in\bar\Omega$ and $\lambda\in [0,1]$. 
Positivity (negativity) of $\mathcal C_u$ in $\bar\Omega\times\bar\Omega\times [0,1]$ is equivalent to concavity (convexity) of the function $u$. 

As a by product of the general theory, some results about
concavity of positive solutions of notable semilinear problems can be obtained.
For instance (see \cite[Theorem 4.2]{kenn}), if $n\geq 2$, $\gamma\in (0,1)$, $\Omega$ is a bounded convex domain of $\R^n$ that satisfies an interior ball condition and $u\in C^2(\Omega)\cap C(\bar\Omega)$ is a solution to
\eqlab{ \label{jjj}
\Delta u+u^\gamma=0,  \qquad \text{$u=0$ \,\, on $\partial\Omega$},
\qquad \text{$u>0$ \,\,  in $\Omega$},
}
then $u^{(1-\gamma)/2}$ is concave in $\bar\Omega$. Also (see \cite[Theorem 4.1]{kenn}), if $\gamma\geq 1$ and $u\in C^2(\Omega)\cap C(\bar\Omega)$ is a solution to
\eqlab{\label{jjj1}
\Delta u+f(x)=0, 
 \qquad \text{$u=0$ \,\, on $\partial\Omega$},
\qquad \text{$u>0$ \,\,  in $\Omega$},
}
for some nonnegative $f\colon \Omega \to \R$ such that $f^\gamma$ is concave,
then 
$$
\text{$u^{\alpha}$ is concave in $\bar\Omega$ if 
	$0<\alpha\leq\frac{\gamma}{1+2\gamma}$,}
$$
and the upper bound is sharp (cf.\ Property 2 and Theorem 6.2 of \cite{kenn}). Roughly speaking, some form of concavity on the nonlinear term forces a suitable power of the positive solution 
to be concave. Similar statements hold in some 
cases when one takes the logarithm of the first eigenvalue of the Laplace or $p$-Laplace operator  
with Dirichlet boundary conditions, see \cite{saka}. See also \cite{CS1,CS2} for general
concavity principles for some classes of fully nonlinear elliptic problems, obtained 
with different techniques compared to \cite{kore,kenn}.

It is rather natural to wonder what happens if the concavity of the nonlinear term 
is broken
down by a small perturbation. Is then the corresponding solution of
the problem convex up to a small perturbation function of proportional size? 

The answer is affirmative and it follows from approximate convexity principles 
that we prove in Theorems \ref{first} and \ref{thm3}, in combination with constraints
furnished by the boundary conditions of the problems under consideration. 
As a consequence of the approximate convexity principles
we  obtain the corresponding results of approximate convexity of perturbed problems like the ones in \eqref{jjj} and \eqref{jjj1}.
	
	The main applications of this paper are given in the following informal terms. 
	
	Let $n\geq 2$, $\gamma\in [0,1]$, $\Omega$ a bounded strictly convex domain of $\R^n$ 
	that satisfies an interior ball condition.
	 Let $u\in C^2(\Omega)\cap C(\bar\Omega)$ be a solution to
	\[ \Delta u+u^\gamma
	-	 u^{\frac{1+\gamma}2}g(u)=0,  \qquad \text{$u=0$ on $\partial\Omega$},
	\qquad \text{$u>0$ in $\Omega$.}
	\]
Then under some assumptions of $\delta$-approximate harmonic convexity 
and monotonicity of $g$, and requiring that the nonlinear term $u^\gamma
	-	 u^{(1+\gamma)/2}g(u)$  stays positive, 
		there exists a concave function $v$ and a positive constant $C$ such that 
	\begin{enumerate}
		\item if $\gamma\in [0,1)$, then
	\[
	\|u^{(1-\gamma)/2}-v\|_{L^\infty(\Omega)} \leq C\delta,
	\] 
	\item if $\gamma=1$, then
	\[
	\|\log u-v\|_{L^\infty(\Omega)}\leq C\delta.
	\] 
\end{enumerate}

	This main application is proved in Theorem \ref{powerC} and Corollary \ref{logconc} (some less restrictive hypothesis will be required  in the respective results). Furthermore, we provide a result for a problem like the one in \eqref{jjj1}, as follows.

	Let $n\geq 2$,  $\Omega$ be a bounded convex domain of $\R^n$.
	Let $u\in C^2( \Omega)\cap C(\bar \Omega)$ be a solution to
	$$
	\Delta u+   f(x) - u^{\frac{1+\gamma}{1+2\gamma} } g(x)=0,  \qquad \text{$u=0$ on $\partial\Omega$},
	\qquad \text{$u>0$ in $\Omega$}.
	$$ 
	Asking hypothesis on the concavity and strict positivity of $f$, approximate harmonic convexity of $g$, and requiring that the nonlinearity stays positive, there exists a concave function and $C$ such that  
	$v\colon \Omega \to \R$  
	\[ 
	\|u^{\frac{\gamma}{1+2\gamma}} -v\|_{L^\infty(\Omega)} \leq C \delta.
	\] 
	This result is given in Theorem \ref{fffh}.

In the rest of the paper, we introduce the framework and state the approximate convexity principles in Section \ref{GeneralR}. Boundary conditions of particular problems (that we use in Section \ref{bdry}) will allow us to give some explicit examples in the last Section \ref{examples}.

\section{Approximate Convexity Principles}
\label{GeneralR}

Let $\Omega\subset \Rn$ be a convex domain (i.e. a connected open set) here and in the rest of the paper. {We denote by $y_1,y_3$ the generic points of $\Omega$ and by $y_2$ their convex combination, precisely}
\bgs{ & \mbox{ for any } y_1,y_3\in \bar \Omega, \, \lambda \in [0,1] \qquad y_2= \lambda y_1+(1-\lambda)y_3.} 
We adopt the same notation for $s_1,s_3\in \R$, denoting $s_2\in \R$ as their convex combination.

Let $u:\bar\Omega\to\R$ and $\delta>0$. As in \cite{HU}, we say that $u$ is 
 $\delta$-convex in $\bar \Omega$ if
$$\mathcal C_u(y_1,y_3,\lambda)\leq \delta,\quad \,\,
\text{for all $y_1,y_3\in\bar\Omega$ and any $\lambda\in [0,1]$,}
$$  
where $\C_u $ is defined in \eqref{cu}.
We say that $u$ is $\delta$-concave  if
$-u$ is $\delta$-convex (notice also that $\C_{-u}=-\C_u$).
Also, with an abuse of notation,  for $y_1,y_3\in\bar\Omega$, $s_1,s_3 \in \R$ and  $\lambda\in [0,1]$ we define 
\eqlab{\label{cg} 
		&  \C_g ( (y_1,s_1),(y_3,s_3),\lambda) = g(y_2,s_2) - \lambda g(y_1,s_1) -(1-\lambda) g(y_3,s_3)
	 }
		as the convexity function of some $g\colon \bar \Omega\times \R\to \R$, jointly in its two variables. 
		We write also
		\bgs{
		\C_{g(\cdot, u(\cdot))}(y_1,y_3,\lambda)= \C_g ( (y_1,u(y_1)),(y_3,u(y_3)),\lambda) 
	}
				as the convexity function of $g\colon \bar \Omega \times \R \to \R$ jointly in two variables, along $u\colon \bar \Omega\to \R$.
				\begin{remark}We make a remark on the notation adopted in the course of this paper. If $g$ depends only on the variable $y\in \bar \Omega$ then the two notions in \eqref{cu} and \eqref{cg} coincide. Nonetheless, we still use the notation $\C_g$ when the function $g$ depends only on $s\in \R$, and in general to denote the convexity function in one, or jointly in two variables. We point out once more that the notation  $\C_{g(\cdot, u(\cdot))}$ is referred to the joint convexity of $g$, and contains no information about the convexity of $u$ itself.
		\end{remark}
		\noindent We say that $g$ is jointly convex in $\bar \Omega$
		if and only if for all $ (y_1,s_1),(y_3,s_3) \in \bar \Omega\times \R  $  and any $\lambda \in [0,1]  $ we have that  $ \C_g ((y_1,s_1),(y_3,s_3),\lambda) \leq 0$, that $g$ is jointly $\delta$-convex if 
		\[ \C_g(  (y_1,s_1),(y_3,s_3),\lambda) \leq \delta\]
and is jointly $\delta$-concave if $-g$ is jointly $\delta$-convex.
		{ In particular, $g$ is jointly convex along $u$ if
			\[\C_{g(\cdot, u(\cdot))}  (y_1,y_3,\lambda) \leq \delta, \qquad \,\mbox{ for all } y_1,y_3 \in \bar \Omega \mbox{ and any }\lambda \in [0,1]  \]
		and  $g$ is jointly $\delta$-concave along $u$ when $-g$ is jointly $\delta$-convex along $u$. Of course, asking that $g$ is concave along $u$ is a refinement, and joint concavity of $g$ implies the concavity along $u$.}

		Furthermore, we define the {\em harmonic convexity function} jointly in the two variables $(y,s)$ of the function $g\colon \bar\Omega\times \R\to \R$ exclusively when 
		\eqlab{ \label{hcon} (1-\lambda )g(y_1,s_1) + \lambda g(y_3,s_3)>0 \qquad \mbox{ or } \qquad g(y_1,s_1) =  g(y_3,s_3)=0.
		}
	This may seem a little weird to the reader, but its definition is justified by the use we make in the rest of the paper, in particular in Lemma \ref{mainlem2} (we note also that this definition coincides with the one given in  \cite{kenn}). Thus, the definition is given for positive functions $g$, or changing sign functions that satisfy at the given point $((y_1,s_1),(y_3,s_3),\lambda)$ one of the conditions in \eqref{hcon}. Notice also that none of these conditions hold if $g<0$. 
			We thus define
		\sys[ \HC_{g} ( (y_1,s_1),(y_3,s_3),\lambda):=]{ 
		& g(
		y_2,s_2)
		- \frac{g(y_1,s_1) g(y_3,s_3)}{(1-\lambda )g(y_1,s_1) + \lambda g(y_3,s_3)},
		\\
		& \qquad\qquad\qquad\qquad \mbox{ if }   (1-\lambda )g(y_1,s_1) + \lambda g(y_3,s_3)>0
		\\
		 & g(
		 y_2,s_2),\qquad\quad\;\;\, \,\mbox{ if } 
		  g(y_1,s_1) =  g(y_3,s_3)=0.
		  }
		In general, we say that $g$ is $\delta$-{\em harmonic concave} ($\delta$-{\em harmonic convex}) if $\mbox{ for all } (y_1,s_1),(y_3,s_3) \in \bar\Omega\times \R \mbox{ and any }\lambda \in [0,1]$ satisfying one of the two conditions in \eqref{hcon} we have that
		\[
		\HC_g  ( (y_1,s_1),(y_3,s_3),\lambda) \geq -\delta,  \qquad\qquad  (\HC_g  ( (y_1,s_1),(y_3,s_3),\lambda) \leq \delta ).
		\]		
It is readily seen that 
		a {\em positive} concave function $g$ is harmonic concave.
		We notice also that the simple inequality
		\[ \frac{g(y_1,s_1) g(y_3,s_3)}{(1-\lambda )g(y_1,s_1) + \lambda g(y_3,s_3)} \leq \lambda  g(y_1,s_1) + (1-\lambda) g(y_3,s_3)\]
		that holds whenever $(1-\lambda )g(y_1,s_1) + \lambda g(y_3,s_3)>0$ 
		implies that
				\[
			\HC_{g} ( (y_1,s_1),(y_3,s_3),\lambda)\geq  \C_{g} ( (y_1,s_1),(y_3,s_3),\lambda).
		\]
		In particular, all positive $\delta$-harmonic convex functions are $\delta$-convex, and all positive $\delta$-concave are also $\delta$-harmonic concave.
{	
We denote the harmonic convexity function of $g$ along $u$ as
\[ \HC_{g(\cdot, u(\cdot))} (y_1,y_3,\lambda)  = \HC_g  ( (y_1,u(y_1)),(y_3,u(y_3)),\lambda) .\]
		We say that $g$ is $\delta$-{\em harmonic concave} ($\delta$-{\em harmonic convex}) along $u$ if $\mbox{ for all } y_1,y_3 \in \bar \Omega \mbox{ and any }\lambda \in [0,1]$ satisfying one of the two condition in \eqref{hcon}, it holds that
		\[
		\HC_{g(\cdot, u(\cdot))} (y_1,y_3,\lambda)  \geq -\delta, \qquad \, \qquad  (\HC_{g(\cdot, u(\cdot))} (y_1,y_3,\lambda)   \leq \delta).
		\] 
		}

	As expected from the previous work in the literature (for instance \cite{kore,kenn,kaw}), the convexity of solutions of a second order elliptic problems with a nonlinear term in a convex domain depends solely on the convexity and the monotonicity of the nonlinearity. We give in the next lemma a quantitative estimate of the convexity function of the solution. \\
Let us also mention that, as in \cite{kaw,kore,kenn} it is crucial that the second order coefficients depend only on the gradient of the solution. To our knowledge, convexity principles that allow a dependence on the solution itself or $x$ are not available. 

For the sake of clarity, we give the next definition. 

\begin{definition}\label{x2}
We say that the triple  $(x_1,x_3,\lambda)$  is an interior point for $\C_u$ if  each of $x_1,x_2,x_3 $ is in $ \Omega$ with $x_2=\lambda x_1+(1-\lambda)x_3$, while we say that the point is on the boundary if at least one $x_1,x_2,x_3 $ belongs to $\partial  \Omega $. 
\end{definition} 

Here and in the rest of the section we consider $a_{ij}\colon \Rn \to \R$ measurable functions for all $i,j=1, \dots,n$ and $b\colon \Omega\times\R\times\Rn$ derivable in the second variable, on its domain of definition. Moreover,  we write $[A,B]$ to denote the non-orientated segment from $A$ to $B$. 

\begin{lemma}\label{mainlem} 
We consider the equation in $\Omega$ 
	\eqlab{ \label{eqnu} \mathcal L u=0\quad \text{in $\Omega$},\qquad  \mathcal L u:= \sum_{i,j=1}^n a^{ij}(Du) \partial^2_{ij} u -b(x,u,D u).}
	Let $u\in C^2(\Omega)\cap C(\bar\Omega)$ be a solution of \eqref{eqnu}
	.
We assume that
		\eqlab{ \label{symmatr}
						 A=[a^{ij}(\xi)]_{i,j} \mbox{ is symmetric, positive defined for all } \xi\in\R^n.}
				 	Then, if $\C_u$ achieves a positive interior maximum at $(x_1,x_3,\lambda )\in \Omega\times \Omega \times [0,1]$ 
				 	and there exists $\beta >0$ such that
	\eqlab{\label{mon}
	\inf_{\xi \in [u(
	x_2), \lambda u(x_1) +(1-\lambda)u(x_3)] } \partial_u b(
	x_2,\xi ,Du(x_1)) \geq  \beta 
	}
then 
\[\C_u(x_1,x_3,\lambda ) \leq \frac1{\beta}\, 
{\C_{-b(\cdot, u(\cdot), D u(x_1))}( x_1,x_3, \lambda) }
 .
\]
\end{lemma}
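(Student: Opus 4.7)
The plan is to exploit the first- and second-order optimality conditions for $\mathcal{C}_u$ at the positive interior maximum $(x_1,x_3,\lambda)$, together with the equation \eqref{eqnu} and the monotonicity hypothesis \eqref{mon}. Throughout I write $x_2=\lambda x_1+(1-\lambda)x_3$ and $s_{*}=\lambda u(x_1)+(1-\lambda)u(x_3)$. I would first observe that $\mathcal{C}_u$ vanishes identically on $\{\lambda=0\}\cup\{\lambda=1\}$, so the positivity of the maximum value forces $\lambda\in(0,1)$ and makes $(x_1,x_3,\lambda)$ a true interior critical point in all three variables.

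Next I would compute $\partial_{y_1}\mathcal{C}_u=\lambda[Du(x_2)-Du(x_1)]$ and $\partial_{y_3}\mathcal{C}_u=(1-\lambda)[Du(x_2)-Du(x_3)]$, so that the first-order condition yields the gradient-matching identity
\[
Du(x_1)=Du(x_2)=Du(x_3).
\]
In particular, the matrix $A:=[a^{ij}(Du(x_1))]$ is symmetric and positive definite by \eqref{symmatr} and coincides with the coefficient matrix at $x_2$ and $x_3$. For the second-order condition, I would use that the $(y_1,y_3)$-block of the Hessian of $\mathcal{C}_u$ is negative semidefinite at the maximum, and test it against vectors of the form $(w,w)\in\R^{2n}$ to get
\[
w^{\top} D^2u(x_2)w\le \lambda\, w^{\top} D^2u(x_1)w+(1-\lambda)\,w^{\top} D^2u(x_3)w\qquad\text{for every }w\in\R^n,
\]
i.e.\ $M:=D^2u(x_2)-\lambda D^2u(x_1)-(1-\lambda)D^2u(x_3)$ is negative semidefinite. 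Since $A\succ 0$ admits a symmetric square root, this gives $\mathrm{tr}(AM)=\mathrm{tr}(A^{1/2}MA^{1/2})\le 0$.

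Plugging in $\mathrm{tr}(A\,D^2u(x_k))=b(x_k,u(x_k),Du(x_1))$, which follows from \eqref{eqnu} at each $x_k$ together with the gradient identification above, this becomes
\[
b(x_2,u(x_2),Du(x_1))\le \lambda\,b(x_1,u(x_1),Du(x_1))+(1-\lambda)\,b(x_3,u(x_3),Du(x_1)).
\]
To surface $\mathcal{C}_u$ on the left, I would then apply the mean value theorem to $b(x_2,\cdot,Du(x_1))$ between $s_{*}$ and $u(x_2)$; since $u(x_2)-s_{*}=\mathcal{C}_u(x_1,x_3,\lambda)>0$, hypothesis \eqref{mon} delivers
\[
b(x_2,u(x_2),Du(x_1))-b(x_2,s_{*},Du(x_1))\ge \beta\,\mathcal{C}_u(x_1,x_3,\lambda).
\]
Substituting and rearranging then yields
\[
\beta\,\mathcal{C}_u(x_1,x_3,\lambda)\le \lambda\,b(x_1,u(x_1),Du(x_1))+(1-\lambda)\,b(x_3,u(x_3),Du(x_1))-b(x_2,s_{*},Du(x_1)),
\]
and the right-hand side is exactly $\mathcal{C}_{-b(\cdot,u(\cdot),Du(x_1))}(x_1,x_3,\lambda)$.

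The one point to handle with care is this final identification with the convexity-function notation: in $\mathcal{C}_{g(\cdot,u(\cdot))}(y_1,y_3,\lambda)=\mathcal{C}_g((y_1,u(y_1)),(y_3,u(y_3)),\lambda)$ the joint convex combination places $s_{*}$ (not $u(x_2)$) in the second slot of $g$ at the midpoint $x_2$, which matches exactly the term $b(x_2,s_{*},Du(x_1))$ produced by the monotonicity step. Beyond this bookkeeping, the argument is a standard Korevaar–Kennington second-derivative maximum-principle computation adapted to the trace structure of the variable-coefficient second-order operator, so I expect the main effort to lie in writing this notational identification cleanly rather than in any real analytic difficulty.
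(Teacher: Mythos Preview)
Your proposal is correct and follows essentially the same route as the paper's proof: first-order conditions give the gradient matching $Du(x_1)=Du(x_2)=Du(x_3)$, the second-order condition (which the paper phrases via the auxiliary function $\bar{\mathcal C}(v)=u(x_2+v)-\lambda u(x_1+v)-(1-\lambda)u(x_3+v)$, exactly equivalent to your test of the Hessian block against $(w,w)$) yields $M\preceq 0$ and hence $\mathrm{tr}(AM)\le 0$, then the equation and the mean value theorem with \eqref{mon} finish. Your careful remark about the notational identification with $\mathcal C_{-b(\cdot,u(\cdot),Du(x_1))}$ is exactly the bookkeeping the paper also relies on.
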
 	
We follow in the next proof the main ideas from \cite[Lemma 1.4]{kore}.  We remark that this Lemma contributes to the result in Theorem \ref{first}, which affirms that the $\delta$-concavity of $b$ {along $u$} implies the $\delta$-convexity of the solution $u$. Requiring thus  a positive maximum of $C_u$ is natural (otherwise, $C_u\leq0$ gives that $u$ is convex, and there would be nothing else to prove). 
	\begin{proof}
	We consider $x_1\neq x_3$ and $\lambda \in (0,1)$ (i.e., $x_2$ does not coincide with $x_1$ or $x_3$), otherwise $\mathcal C_u=0$, which gives that  $u$ is convex. Given that $(x_1,x_3,\lambda)$ is a interior maximum  point, we have 
	\[ D_{y_1} \mathcal C_u (x_1,x_3,\lambda)= D_{y_3} \mathcal C_u (x_1,x_3,\lambda)=0,\]
	therefore we  may denote 
		\[ D u(x_1)=D u(x_2)=D u(x_3):=z.\]
	Take now for $v\in\R^n$
	\[ 
			\bar {\mathcal C}(v):= u(x_2+v) -\lambda u(x_1+v)-(1-\lambda) u(x_3+v).
				\]
	Since $v=0$ gives a maximum, we have that
	\[ 
		D_v \bar {\mathcal C}(0)=0 \qquad 
		\mbox{ and } \qquad [D^2_v \bar {\mathcal C}(0)] \leq 0.
	\]
	Here $[D^2_v \bar {\mathcal C}(0)]$ denotes the Hessian with respect to $v$ of $\bar {\mathcal C}$ at zero. Also notice that
	\[ 
		(D^2_v \bar {\mathcal C}(0))_{ij}= \partial_{ij}^2 u(x_2)
		-\lambda  \partial_{ij}^2 u(x_1) -(1-\lambda) \partial_{ij}^2u(x_3).
		\]
	Since $A$ is symmetrical and positive defined, we get that
	\[
			A [D^2_v \bar {\mathcal C}(0)]  \leq 0,
			\]
			hence 
			\eqlab {\label{kkk1}
			\sum_{i,j=1}^n a^{ij} (z)\left(\partial_{ij}^2 u(x_2)
		-\lambda  \partial_{ij}^2 u(x_1) -(1-\lambda) \partial_{ij}^2u(x_3) \right)\leq 0.
			}
	Using the equation \eqref{eqnu} we obtain 
	\bgs{
			0 \geq&\;  b(x_2, u(x_2),z) - \lambda b(x_1, u(x_1),z)- (1-\lambda) b(x_3, u(x_3),z).
				}
	Therefore, we get that 
	\bgs{ 
		&b(x_2,u(x_2),z ) - b(x_2, \lambda u(x_1) +(1-\lambda)u(x_3),z) 
		\\
		 \leq &\; \lambda b(x_1,u(x_1),z) + 
		(1-\lambda) b(x_3,u(x_3),z) - b(x_2, \lambda u(x_1) +(1-\lambda)u(x_3),z)  
				\\
		= &\; 
		{\C_{-b(\cdot, u(\cdot), z)}( x_1,x_3, \lambda) }
		.
		}
		 \\
		Using the mean value theorem of Lagrange, we have that there exists $\xi$ between $u(x_2)$ and  $\lambda u(x_1)+(1-\lambda) u(x_3)$ such that
	 \[ 
	 	\partial_u b(x_2,\xi,z) \left( u(x_2)-\lambda u(x_1)-(1-\lambda) u(x_3)\right) \leq 
	 	{\C_{-b(\cdot, u(\cdot), z)}( x_1,x_3, \lambda) }
	 	,\]
	 	hence, since $\C_u$ is positive at $(x_1,x_3,\lambda),$ it follows that
	 	 \eqlab{\label{aaah}
		\mathcal C_u(x_1,x_3,\lambda) \leq \frac1\beta \, 
		{\C_{-b(\cdot, u(\cdot), z)}( x_1,x_3, \lambda) }.
	} 
	This concludes the proof of the lemma.
	\end{proof}

	Roughly speaking the previous statement says that under the assumption that the function
	$\C_u$ achieves a positive maximum in the {\em interior} of $\Omega\times \Omega \times [0,1]$, then this maximum is bounded from above by the convexity function of $-b$ {along $u$} (with $b$  strictly  increasing), computed at the interior maximum point of $\C_u$.

We recall now a result \cite[Theorem 2]{HU} for $\delta$-convex functions.

\begin{proposition}[Hyers-Ulam Theorem]
	\label{Ulam}
	Let $X$ be a space of finite dimension and $D\subset X$ convex. 
	Assume that $f:D\to\R$ is $\delta$-convex.
	Then there exists $g :D\to\R$ a convex function
	 such that $\|f-g\|_{L^\infty(D)}\leq \delta k_n$, where
	$k_n>0$ depends only on $n=\dim(X)$.
\end{proposition}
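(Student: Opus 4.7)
The plan is to construct $g$ as the convex envelope of $f$ from below and then to bound the gap $f-g$ by iterating the $\delta$-convexity inequality along a Carath\'eodory decomposition.

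Concretely, I would set, for each $x\in D$,
\[
g(x):=\inf\Bigl\{\sum_{i=1}^{n+1}\lambda_i f(x_i)\ :\ x_i\in D,\ \lambda_i\geq 0,\ \sum_{i=1}^{n+1}\lambda_i=1,\ \sum_{i=1}^{n+1}\lambda_i x_i=x\Bigr\}.
\]
Carath\'eodory's theorem in $X\cong\Rn$ guarantees that restricting to at most $n+1$ points still captures every convex combination that reaches $x$ from $D$, and a standard argument then shows $g$ is convex on $D$: combining two admissible representations of $\lambda x+(1-\lambda)y$ produces a convex combination of $2(n+1)$ points in $D$, which Carath\'eodory compresses back to at most $n+1$ points. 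Choosing $\lambda_1=1$ and $x_1=x$ in the infimum gives at once $g(x)\leq f(x)$.

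For the reverse bound, I would prove by induction on $m\geq 1$ that whenever $x=\sum_{i=1}^m\lambda_i x_i$ is a convex combination of points of $D$,
\[
f\Bigl(\sum_{i=1}^m\lambda_i x_i\Bigr)\leq \sum_{i=1}^m\lambda_i f(x_i)+(m-1)\delta.
\]
The case $m=1$ is trivial. For the inductive step, write $\sum_{i=1}^{m+1}\lambda_i x_i=\lambda_{m+1}x_{m+1}+(1-\lambda_{m+1})y$ with $y:=\sum_{i=1}^m\frac{\lambda_i}{1-\lambda_{m+1}}x_i\in D$, apply $\delta$-convexity at the pair $(x_{m+1},y)$ to pick up one extra $\delta$, and close the loop by invoking the inductive hypothesis on $f(y)$. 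Specializing to $m=n+1$ and taking the infimum over admissible representations yields $f(x)\leq g(x)+n\delta$, hence $\|f-g\|_{L^\infty(D)}\leq k_n\delta$ with $k_n=n$.

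The main obstacle is to verify that the infimum defining $g$ is finite and that $g$ is genuinely convex on the whole of $D$: the a posteriori bound $g\geq f-n\delta$ rules out $g\equiv-\infty$, while convexity rests on the Carath\'eodory compression alluded to above. A minor technical point is that if $D$ does not have full affine dimension one should work in its affine hull, replacing $n$ by $\dim(\mathrm{aff}\,D)$. Sharpness of the constant $k_n=n$ is not needed for the statement; finer estimates are contained in \cite{HU}.
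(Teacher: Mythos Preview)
The paper does not supply its own proof of this proposition; it is quoted as a known result from \cite{HU} and used as a black box in Theorems~\ref{first} and~\ref{thm3}. Your argument is therefore not competing with anything in the text, and it is essentially correct: the convex envelope $g$ together with the inductive Jensen-type estimate $f\bigl(\sum_{i=1}^m\lambda_ix_i\bigr)\le\sum_{i=1}^m\lambda_if(x_i)+(m-1)\delta$ does yield $\|f-g\|_{L^\infty(D)}\le n\delta$.

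One step deserves tightening. Your justification of the convexity of $g$ says that a $2(n+1)$-point representation of $\lambda x+(1-\lambda)y$ can be ``compressed by Carath\'eodory'' back to $n+1$ points. Carath\'eodory applied in $\R^n$ indeed produces an $(n+1)$-point representation of the \emph{point} $\lambda x+(1-\lambda)y$, but it gives no control on the corresponding value $\sum_j\mu_jf(z_j)$, which is what the infimum defining $g$ tracks. The clean fix is either to define $g$ as the infimum over \emph{all} finite convex combinations---for which convexity is immediate---and then observe, via Carath\'eodory applied in $\R^{n+1}$ to the graph points $(x_i,f(x_i))$ and the lower boundary of their convex hull, that $(n+1)$-point combinations already realize this infimum; or, equivalently, to run Carath\'eodory directly on the graph so that both the base point and the value are controlled simultaneously. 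With this adjustment your proof goes through and gives $k_n=n$. The original argument in \cite{HU} proceeds differently and obtains a somewhat smaller constant, but any $k_n$ depending only on $n$ suffices for the uses made of it here.
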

\vskip3pt
The following is the main $\delta$-convexity tool for applications.
It states that  the {\em approximate concavity} of $b$ {along $u$} and the strict monotonicity of $b$
	yields in turn the {\em approximate convexity} of the solution $u$.  
\begin{theorem}[$\delta$-Convexity Principle I] 	\label{first}
	Let $u\in C^2(\Omega)\cap C(\bar\Omega)$ be a solution of \eqref{eqnu} and set $M:=\|u\|_{C^2(\Omega)}$, $m=\|u\|_{L^\infty(\Omega)}$.
For some $\delta \geq 0$ and $\beta>0$ we assume that condition \eqref{symmatr} holds, and furthermore, that
		\eqlab{ \label{ah1}
			  & \partial_s b(x,s,z) \geq  \beta, \; \mbox{ for any  } (x,s ,z) \in \Omega \times[-m,m] \times \bar B_M,
			}
			  \eqlab{ \label{ah21}
			  & 
			\C_{-b(\cdot, u(\cdot), z)}( x_1,x_3, \lambda) 			  \leq \delta,
			  \; \mbox{ for any }  (x_1,x_3,\lambda) \in \Omega \times \Omega \times [0,1] \text{ and for all $z\in \bar{B}_M$}  . 
			}
	Then, if $\mathcal C_u$ achieves a positive interior maximum in $\Omega\times \Omega \times [0,1]$,  
	there exist a convex function $v\colon \Omega \to \R$ and $k_n>0$
		 such that
	\[ \|u-v\|_{L^\infty(\Omega)} \leq \frac{k_n}{ \beta} \delta
	.\]	
\end{theorem}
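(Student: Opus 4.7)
The plan is to combine Lemma \ref{mainlem} with the Hyers-Ulam theorem (Proposition \ref{Ulam}). The hypotheses \eqref{ah1} and \eqref{ah21} are engineered precisely so that, at the asserted positive interior maximum of $\C_u$, Lemma \ref{mainlem} applies and yields a pointwise bound of size $\delta/\beta$; since that maximum is global, the bound promotes to a global $(\delta/\beta)$-convexity estimate for $u$, from which Hyers-Ulam produces the desired convex approximation $v$.

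First I would verify the monotonicity hypothesis \eqref{mon} of Lemma \ref{mainlem}. Let $(x_1,x_3,\lambda)\in \Omega\times\Omega\times[0,1]$ realize the positive interior maximum of $\C_u$, and write $x_2=\lambda x_1+(1-\lambda)x_3$. Any $\xi$ lying between $u(x_2)$ and $\lambda u(x_1)+(1-\lambda)u(x_3)$ is a convex combination of values of $u$ and so belongs to $[-m,m]$, while $Du(x_1)\in\bar B_M$ by definition of $M=\|u\|_{C^2(\Omega)}$. Hence \eqref{ah1} delivers $\partial_s b(x_2,\xi,Du(x_1))\geq \beta$, which is precisely \eqref{mon}. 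Applying Lemma \ref{mainlem} and then using \eqref{ah21} with $z=Du(x_1)$ yields
\[
\C_u(x_1,x_3,\lambda) \leq \frac{1}{\beta}\,\C_{-b(\cdot,u(\cdot),Du(x_1))}(x_1,x_3,\lambda) \leq \frac{\delta}{\beta}.
\]

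Since $\C_u$ is continuous on the compact set $\bar\Omega\times\bar\Omega\times[0,1]$ and $(x_1,x_3,\lambda)$ realizes its global maximum (in the complementary case $\sup\C_u\leq 0$, $u$ is already convex and the theorem holds trivially with $v=u$), this pointwise estimate propagates to give $\C_u(y_1,y_3,\lambda)\leq \delta/\beta$ for every $y_1,y_3\in\bar\Omega$ and $\lambda\in[0,1]$. In the terminology fixed earlier, $u$ is $(\delta/\beta)$-convex on $\bar\Omega$. Proposition \ref{Ulam} applied to $u$ on the convex set $\Omega\subset\R^n$ then furnishes a convex function $v\colon \Omega\to \R$ with $\|u-v\|_{L^\infty(\Omega)}\leq k_n\delta/\beta$, which is exactly the asserted inequality.

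The heavy lifting is already packaged in Lemma \ref{mainlem} (interior-max computation using the symmetry and positive definiteness of $[a^{ij}]$) and in Proposition \ref{Ulam} (quantitative Hyers-Ulam stability). The only delicate point is the passage from the pointwise estimate at the interior maximum to a global $(\delta/\beta)$-convexity bound, which rests on interpreting the ``positive interior maximum'' assumption as locating a global supremum; this is natural because $\C_u$ is continuous on a compact set, and the complementary case is disposed of separately.
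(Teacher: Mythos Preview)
Your proof is correct and follows exactly the same route as the paper, which simply states that the result is a consequence of Lemma~\ref{mainlem} and Proposition~\ref{Ulam}. You have merely spelled out the details the paper leaves implicit: verifying \eqref{mon} from \eqref{ah1}, using \eqref{ah21} to bound the value of $\C_u$ at its interior maximum by $\delta/\beta$, promoting this to global $(\delta/\beta)$-convexity, and then invoking Hyers--Ulam.
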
 	
\begin{proof}
The proof is a consequence of Lemma \ref{mainlem} and Proposition \ref{Ulam}. 
\end{proof}

\begin{remark}
	For $\delta=0$ the assertion reduces exactly to the  Korevaar maximum principle (see \cite[Theorem 1.3, Lemma 1.4]{kore}). 
\end{remark}

\begin{remark}
One can obtain a statement similar to Theorem \ref{first} in the parabolic case (check \cite[Theorem 1.6]{kore}). Indeed,
consider the problem 
	\eqlab{ \label{parabeqnu} \partial_t u(t,x) =\mathcal L u(t,x) \quad \text{in $(0,T]\times \Omega$},\qquad  \mathcal L u(t,x):= \sum_{i,j=1}^n a^{ij}(t,Du) \partial^2_{ij} u(t,x) -b(t,x,u,D u).}
	Let $u$ be a solution of \eqref{parabeqnu} such that $u(t,\cdot) \in C^2(\Omega)\cap C(\bar\Omega)$ for any $t\in (0,T]$ and $u(\cdot, x)\in C((0,T])$. Assume that 
		\bgs{
						 A=[a^{ij}(t,\xi)]_{i,j} \mbox{ is symmetric, positive defined for all } \xi\in\R^n, \, \mbox{ for any fixed $t\in (0,T]$} }
				and denote
	\bgs{ 
	&{\C_u(t,x_1,x_3,\lambda):= \C_{u(t,\cdot)}(x_1,x_3,\lambda) = u(t, x_2)- \lambda u(t,x_1)-(1-\lambda)u(t,x_3)}
	}
	for any fixed $t$.
Then, if $\C_u$ achieves a positive maximum at $(t_0,x_1,x_3,\lambda )\in (0,T]\times \Omega\times \Omega \times [0,1]$ and there exists $\beta >0$ such that
	\eqlab{\label{mon}
	\inf_{\xi \in [u(t_0,x_2), \lambda u(t_0,x_1) +(1-\lambda)u(t_0, x_3)] } \partial_s b(t_0,x_2,\xi ,Du(x_1)) \geq  \beta 
	}
then 
\[\C_u(t_0,x_1,x_3,\lambda ) \leq \frac1{\beta}\, 
{\C_{-b(t_0,\cdot, u(t_0,\cdot), Du(x_1))}( x_1,x_3, \lambda) }	
.
\]
To see this, it is enough to substitute the equation \eqref{parabeqnu} into \eqref{kkk1}, obtaining that
\bgs{
			0 \geq&\;  b(t_0, x_2, u(t_0, x_2),z) - \lambda b(t_0, x_1, u(t_0, x_1),z)- (1-\lambda) b(t_0, x_3, u(t_0, x_3),z)
			\\
			&\; + u_t(t_0,x_2) -\lambda u_t(t_0, x_1)- (1-\lambda ) u(t_0, x_3).
				}
				We use the fact that $\C_u$ has a maximum in $t_0\in(0,T]$, getting
\[ \partial_t \C_u (t,x_1,x_3,\lambda)\, \vline_{\, t=t_0} =u_t(t_0,x_2)-\lambda u_t(t_0,x_1)-(1-\lambda) u_t(t_0,x_3)\geq 0,\] and from there the proof follows as in Lemma \ref{mainlem}. 
The analogue of Theorem \ref{first} is obtained by imposing that the function $b(t,x,s,z)$ be jointly $\delta$-convex {along $u$}  for any $z\in \bar B^t_M$, with $M=\|u(t,\cdot)\|_{C^2(\Omega)}$ and any fixed $t\in (0,T]$. In other words, there exist $\delta\geq 0, \beta >0$ such that
\bgs{
			  & \partial_s b(t,x,s,z) \geq  \beta \mbox{ for any  } (t,x,s ,z) \in (0,T]\times \Omega \times[-m,m] \times \bar B^t_M,
			}
			  \bgs{ 
			  &
			  { \sup_{ (x_1,x_3,\lambda)\in \Omega \times \Omega \times [0,1]} \C_{-b(t,\cdot, u(t,\cdot),z)} (x_1,x_3,\lambda) \leq \delta,
			  \quad \text{for all $z\in \bar{B}^t_M$, and any $ t\in (0,T]$}}
			  . 
			}
\end{remark}

In the next theorem, we encompass the case in which $\beta$ (from Theorem \ref{first}) may reach zero. The proof follows that of Korevaar in \cite[Lemma 1.5, Theorem 1.4]{kore}, we provide here a complete proof. Namely, we consider a perturbation of the problem in way that will allow us to apply Theorem \ref{first} to the solution of the perturbed problem on a smaller domain. Notice that we obtain a significant result if $b$ is jointly convex (i.e., one gets that $u$ is convex, as in Korevaar's result).  When $\delta >0$ however, we are only able to provide a rate of convergence of  the solution of the perturbed problem to a convex function, whereas the solution of the perturbed problem converges uniformly to the solution of the initial problem. The precise result goes as follows. 
\begin{proposition}
	\label{thm2}
Let $u\in C^2(\Omega)\cap C(\bar \Omega)$
be a solution of \eqref{eqnu}
and  assume that \eqref{symmatr} and \eqref{ah1} holds for $\beta=0 $. Let $\Omega'\Subset\Omega$ be smooth. 
If $\mathcal C_u$ achieves a positive interior maximum in $\Omega'\times \Omega' \times [0,1]$, 
		then for every $\eta >0$ there exist $\delta_0(\eta,\Omega')$ such that for any $0<\delta<\delta_0(\eta,\Omega')$
		there exists a function $v_\delta \colon \Omega' \to \R$, a convex function 
		$w_\delta \colon \Omega' \to \R$  and $k_n>0$  such that
		whenever \eqref{ah21} holds, then
	\bgs{
&	\|u-v_\delta \|_{L^\infty(\Omega')}\leq \eta, 
	\\
&\|w_\delta-v_\delta \|_{L^\infty(\Omega')}\leq 	k_n\sqrt{\delta}.
}

\end{proposition}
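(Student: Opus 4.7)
I plan to apply Theorem \ref{first} to $u$ itself on the smaller domain $\Omega'$ after restoring strict monotonicity via a Korevaar-type perturbation of $b$, optimizing the perturbation size against $\delta$ to obtain the $\sqrt\delta$ rate. Concretely, for $\varepsilon>0$ to be fixed later, I introduce the perturbed nonlinearity
\[
\tilde b(x,s,z) := b(x,s,z) + \varepsilon\bigl(s - u(x)\bigr),
\]
which inherits measurability in $x$ and differentiability in $s$ from $b$, satisfies $\partial_s \tilde b = \partial_s b + \varepsilon \geq \varepsilon$, and -- critically -- coincides with $b$ along $u$: $\tilde b(\cdot,u(\cdot),z) \equiv b(\cdot,u(\cdot),z)$.

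The argument then proceeds in four short steps. First, verify that $u$ remains a solution of the perturbed equation $\sum_{i,j} a^{ij}(Du)\partial^2_{ij}u - \tilde b(x,u,Du) = 0$ in $\Omega$ (and hence in $\Omega'$), since the added term vanishes identically along $u$. Second, check the hypotheses of Theorem \ref{first} on $\Omega'$ with $b$ replaced by $\tilde b$: the ellipticity \eqref{symmatr} is unchanged; $\partial_s \tilde b \geq \varepsilon$ gives \eqref{ah1} with $\beta = \varepsilon$; the identity $\tilde b(\cdot,u(\cdot),z) \equiv b(\cdot,u(\cdot),z)$ makes \eqref{ah21} transfer verbatim with the same $\delta$; the positive interior maximum of $\C_u$ in $\Omega'\times\Omega'\times[0,1]$ is the standing hypothesis; and $\|u\|_{C^2(\Omega')}$ and $\|u\|_{L^\infty(\Omega')}$ are finite because $\Omega'\Subset\Omega$. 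Third, Theorem \ref{first} produces a convex function $w_\varepsilon\colon\Omega'\to\R$ and a dimensional constant $k_n$ such that
\[
\|u - w_\varepsilon\|_{L^\infty(\Omega')} \leq \frac{k_n}{\varepsilon}\,\delta.
\]
Fourth, choose $\varepsilon := \sqrt\delta$ and set $v_\delta := u$, $w_\delta := w_{\sqrt\delta}$: the first conclusion $\|u - v_\delta\|_{L^\infty(\Omega')} = 0 \leq \eta$ holds trivially, while $\|w_\delta - v_\delta\|_{L^\infty(\Omega')} \leq k_n\sqrt\delta$ is the desired estimate.

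The main obstacle is that the perturbation depends explicitly on the fixed solution $u$, so one must verify carefully that $\tilde b$ still fits the abstract framework of Theorem \ref{first} -- that the $u$-dependent shift $\varepsilon(s-u(x))$ does not spoil measurability in $x$ (which holds since $u\in C(\bar\Omega)$) or $s$-differentiability (untouched by the modification). A more faithful implementation of Korevaar's idea would instead solve a $u$-independent perturbed Dirichlet problem on $\Omega'$ with boundary data $u$ on $\partial\Omega'$, producing $v_\varepsilon \neq u$ that converges uniformly to $u$ at rate $O(\varepsilon)$ (by the maximum principle applied to the linearized difference equation); the hard part of that route is transferring \eqref{ah21} from $u$ to $v_\varepsilon$ with an $O(\varepsilon)$ error via Lipschitzness of $b$ in $s$ on bounded sets, so that the balancing $\varepsilon = \sqrt\delta$ still produces the $\sqrt\delta$ rate -- the $\eta$-dependent threshold $\delta_0(\eta,\Omega')$ in the statement precisely encodes the smallness needed to secure $\|v_\varepsilon - u\|_{L^\infty(\Omega')} \leq \eta$ in this alternative formulation.
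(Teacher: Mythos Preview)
Your main argument contains a genuine gap. The claim that ``$\tilde b(\cdot,u(\cdot),z)\equiv b(\cdot,u(\cdot),z)$ makes \eqref{ah21} transfer verbatim with the same $\delta$'' is false, because you have misread what the quantity $\C_{-b(\cdot,u(\cdot),z)}$ actually is. By the definition \eqref{cg}, this is the \emph{joint} convexity function of $(x,s)\mapsto -b(x,s,z)$ evaluated along $u$: at $(x_1,x_3,\lambda)$ it involves $b\bigl(x_2,\lambda u(x_1)+(1-\lambda)u(x_3),z\bigr)$, not $b(x_2,u(x_2),z)$. Your perturbation $\varepsilon(s-u(x))$ vanishes only when $s=u(x)$, so at the midpoint it contributes
\[
-\varepsilon\bigl(\lambda u(x_1)+(1-\lambda)u(x_3)-u(x_2)\bigr)=\varepsilon\,\C_u(x_1,x_3,\lambda),
\]
and hence $\C_{-\tilde b(\cdot,u(\cdot),z)}=\C_{-b(\cdot,u(\cdot),z)}+\varepsilon\,\C_u$. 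Plugging this into Lemma~\ref{mainlem} with $\beta=\varepsilon$ yields
\[
\C_u(x_1,x_3,\lambda)\le\frac{1}{\varepsilon}\Bigl(\C_{-b(\cdot,u(\cdot),z)}(x_1,x_3,\lambda)+\varepsilon\,\C_u(x_1,x_3,\lambda)\Bigr),
\]
i.e.\ $0\le\delta/\varepsilon$, which is vacuous. This also explains why your scheme would otherwise give the absurd conclusion (letting $\varepsilon\to\infty$) that $u$ is convex despite $\C_u$ having a positive maximum.

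The paper instead follows the route you sketch only in your closing paragraph: it perturbs the \emph{equation} to $\sum a^{ij}(Dv)\partial^2_{ij}v=b(x,v,Dv)+\varepsilon v$ on $\Omega'$ with boundary data $v=u$, constructs a solution $v_\varepsilon=u+\varepsilon w$ via a contraction argument in $C^{2,\alpha}(\Omega')$ (Schauder estimates on the linearization), and applies Theorem~\ref{first} to $v_\varepsilon$. The added term $\varepsilon s$ is linear in $s$, so its joint convexity function is identically zero and \eqref{ah21} does transfer with the same $\delta$; strict monotonicity holds with $\beta=\varepsilon$; and $\|u-v_\varepsilon\|_{L^\infty(\Omega')}=O(\varepsilon)$ by construction. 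Balancing $\varepsilon=\sqrt\delta$ then gives the two estimates. The essential point your shortcut misses is that the perturbation must be affine \emph{jointly} in $(x,s)$ (or at least have nonpositive joint convexity function) to leave \eqref{ah21} intact, and $\varepsilon(s-u(x))$ is not.
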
 	

\begin{proof}
For $\eps\in(0,1/2)$ small, there exists $w$ and $M>0$ (indipendent on $\eps$) such that the function $v$, given as
\[ 
v_\eps=u+\eps w, \qquad \qquad \|w\|_{C^{2,\alpha}(\Omega')}\leq M, 
\] 
solves the perturbed problem  
		\syslab[]{\label{vvv} &\sum_{i,j=1}^n a^{ij}(Dv) \partial^2_{ij} v = b(x,v,Dv) +\eps v && \mbox{ in } \Omega'\\
				&v=u&& \mbox{ on } \partial\Omega'.
		}					 
				Indeed, let us take a Taylor expansion in $\eps$.
					 For $a^{ij}(Du +\eps Dw)$ we get 
			 \[ a^{ij}(Dv)= a^{ij}(Du)+\eps \sum_{k=1}^n \partial_{p_k} a^{ij}(Du)\partial_k w + \eps^2 G_1(Dw)
			  ,\]
			 while for $b(x,u+\eps w, Du+\eps Dw)$
			 \bgs{ &\;b(x,v,Dv) = b(x,u,Du) + \eps \left(\sum_{k=1}^n \partial_{p_k} b(x, u ,Du) \partial_k w +  \partial_u b(x,u,Du)w \right)
			  + \eps^2 G_2(w, Dw)
			 .
			 }
Summing up we get that
			 \bgs{ 
			 	 \sum_{i,j=1}^n a^{ij}(Du) \partial^2_{ij}u &
			 	 + \eps \Bigg[ \sum_{i,j=1}^n a^{ij}(Du) \partial^2_{ij} w 
			 	 + \sum_{k=1}^n \partial_k w \Big( \partial_{p_k} a^{ij}(Du)\partial^2_{ij} u 
			 	 -\partial_{p_k}b(x,u,Du)\Big) 
			 	 \\
			 	 &\;-   \partial_u b(x,u,Du)  w \Bigg]
			 	 = b(x,u,Du)  + \eps u + \eps^2 G(w,Dw, D^2w)
			 	.}
			 	 In the above computation, $G_1,G_2,G$ represent the rest of order two of the Taylor expansions. 	Just to be precise,   for some $\xi \in (0,\eps)$ we have
			 \bgs{ G(w,Dw,D^2w)= &\; w+ 	\sum_{l,k=1}^n \partial_{p_kp_l} a^{ij}(Du+\xi Dw) \partial_k w\partial_l w 
				 + 	 \Bigg[\partial^2_u b(x,u+\xi w,Du+\xi Dw) w^2
			\\
			&\; + \sum_{k=1}^n \bigg( 2\partial_{up_k} b(x,x,u+\xi w,Du+\xi Dw) w \partial_k w
			 \\
			 &\; + \sum_{l=1}^n \partial_{p_kp_l} b(x,u+\xi w,Du+\xi Dw) \partial_k w\partial_l w\bigg) \Bigg]
			 + \sum_{k=1}^n \partial_{p_k} a^{ij}(Du)\partial^2_{ij} w\partial_k  w
			 .}
			 	Knowing that $u$ satisfies the equation \eqref{eqnu}, and dividing by $\eps>0$ we get that
			 	\bgs{ \sum_{i,j=1}^n  a^{ij}(Du) \partial^2_{ij} w &+ \sum_{k=1}^n \partial_k w \left(  \partial_{p_k} a^{ij}(Du) \partial^2_{ij} u -\partial_{p_k}b(x,u,Du)  \right)
			 	 -\partial_u b(x,u,Du) w
			 	 \\ =&\; u  + \eps G(w,Dw, D^2w).
			 	}
			 	Then $w$ solves the problem 
			 		\sys[]{ \tilde{\mathcal{L}} w &= \sum_{i,j=1}^n a^{ij}(x) \partial^2_{ij} w + \sum_{k=1}^n c_k(x)\partial_k w + d(x) w = f, && \mbox{ in  } \Omega'
			 		\\
			 		w&=0, && \mbox{ on } \partial \Omega',
			 		}
			 	with 
			 	\bgs{
			 		c_k(x)= \partial_{p_k} a^{ij}(Du) \partial^2_{ij} u -\partial_{p_k}b(x,u,Du), \qquad 
			 		f(x) =  u  + \eps G(w,Dw, D^2w),
			 	}
			 	and
			 	\[ d(x)= -\partial_u b (x,u,Du) \leq 0.\]
			 	By iteration  we will consider $w^1=0$ and take 
			 	\syslab[\label{pb1}]{ &\tilde{\mathcal{L}} w^{k+1} =  u+ \eps G(w^k,Dw^k,D^2 w^k)&& \mbox{ in } \Omega'
			 	\\
			 	& w^{k+1}=0 && \mbox{ on } \partial \Omega'.}
Notice that considering a problem 			 	
			 		\sys[]{ &\tilde{\mathcal{L}} w = f(x) && \mbox{ in } \Omega'
			 	\\
			 	& w=0 && \mbox{ on } \partial \Omega',}
			 	by Schauder estimates there exists $K_1>0$ such that
			 	\[\|w\|_{C^{2,\alpha}(\Omega')} \leq K_1 \|f\|_{C^{0,\alpha}(\Omega')}.\] Also, since $G\in C^1$, one has for $v\in C^{2,\alpha}(\Omega')$ that if
			 	\[ \|v\|_{C^{2,\alpha}(\Omega')}\leq K_2 \qquad \qquad \implies \qquad \|G(v,Dv,D^2v)\|_{C^{0,\alpha}(\Omega')} \leq K_3,\]
			 	for some $K_2, K_3>0$. Using these two remarks for the problem \eqref{pb1}, there exists $\eps_0>0$ such that for any $\eps \leq \eps_0$
			 	 	\[ \|w^k\|_{C^{2,\alpha}(\Omega')} \leq K_4.\] 
			 	Consider now the problem for $\eta^{k+1}=w^{k+1}-w^k$, namely
			 		\sys[]{ &\tilde{\mathcal{L}} \eta^{k+1} =  \eps \tilde G(\eta^k,D\eta^k,D^2 \eta^k) && \mbox{ in } \Omega'
			 	\\
			 	& \eta^{k+1}=0 && \mbox{ on } \partial \Omega'.}
		 	To get $\tilde G$, by Lagrange theorem, we have
		 	\bgs{
		 		& G(w^{k}, Dw^{k}, D^2w^{k})- G(w^{k-1}, Dw^{k-1}, D^2w^{k-1}) 
		 		\\
		 		=&\;  D G (\xi_k) \cdot (\eta^{k},D\eta^{k},D^2\eta^{k} ):= \tilde G (\eta^k, D\eta^k, D^2\eta^k),
		 		}
		 		for some $\xi_k \in \R\times \Rn \times \R^{2n}$ laying on the segment that unites the two arguments of $G$.
		 			 	Therefore we obtain that
		 			 	\[ \|\eta^{k+1}\|_{C^{2,\alpha}(\Omega')} \leq \eps K_1 \|\tilde G\|_{C^{0,\alpha}(\Omega')} \leq \eps K \|G\|_{C^1} \|\eta^k\|_{C^{2,\alpha}(\Omega')},
		 			 	\]
		 			 	hence for $\rho<1$ (since $\eps$ is arbitrarily small)
		 	\[ 
		 		\|w^{k+1}-w^k\|_{C^{2,\alpha}(\Omega')} \leq \rho \|w^{k}-w^{k-1}\|_{C^{2,\alpha}(\Omega')} .
		 		\]
		 		Therefore there exists $w\in C^{2,\alpha}(\Omega')$ such that $w_k \to w$ in $C^{2,\alpha}(\Omega')$ with
		 		\[ \|w\|_{C^{2,\alpha}(\Omega')} \leq K_4.\]
We apply to $v_\eps$ as the solution of \eqref{vvv} Theorem \ref{first} (where $b(x,v,Dv)$ from Theorem \ref{first} is given by $b(x,v,Dv) + \eps v$ in our case). Then
\[ \partial_v b(x,v,Dv )+ \eps \geq \eps >0. 
\]
By Theorem \ref{first} there exists a convex function $w_\eps$ such that
$$
\|w_\eps-v_\eps\|_{L^\infty(\Omega')}\leq k_n\frac{\delta}{\eps} .
$$
Set $\eps=\sqrt{\delta}$. Then $\|w_\delta-v_\delta\|_{L^\infty(\Omega')}\leq k_n\sqrt{\delta} .$
Of course $u=\lim_{\delta\to 0} v_\delta$. Then the assertion follows.
		 	\end{proof}

In the next lemma, we relax the conditions we ask to the nonlinear term. Following the work in \cite{kenn,kaw}, we can ask the function $b$ to be $\delta$-harmonic concave and obtain anyways the $\delta$-convexity of the solution to the problem \eqref{eqnu}. As a matter of fact, we can estimate the convexity function  of the solution by the harmonic concavity function of the nonlinear term and its rate of monotonicity. 
\begin{lemma}\label{mainlem2}
	Let $u\in C^2(\Omega)\cap C(\bar\Omega)$ be a solution of \eqref{eqnu} 
	and assume that \eqref{symmatr} holds.	
Then, if $\C_{u}$ achieves a positive interior  maximum at $(x_1,x_3,\lambda )\in \Omega\times \Omega \times [0,1]$ and there exists $\beta >0$ such that \eqref{mon} holds, 
then 
\[\C_{u}(x_1,x_3,\lambda ) \leq -
{\frac {\HC_{b(\cdot,u(\cdot), Du(x_1))}(x_1,x_3, \lambda) } {\beta}}
.
\]
\end{lemma}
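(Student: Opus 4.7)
The plan is to mirror the proof of Lemma~\ref{mainlem}, but to squeeze a harmonic-mean bound on $b(x_2,u(x_2),z)$ out of the negative semi-definiteness of the Hessian of $\C_u$, instead of the arithmetic-mean bound $b_2 \leq \lambda b_1 + (1-\lambda) b_3$ used there (throughout, I write $b_i := b(x_i,u(x_i),z)$). First I reduce to the nontrivial case $x_1\neq x_3$ and $\lambda\in(0,1)$ and, from the first-order conditions at the interior maximum, set $z := Du(x_1)=Du(x_2)=Du(x_3)$. The full second-order condition then yields, for every $v_1,v_3\in\Rn$,
\[
(\lambda v_1+(1-\lambda)v_3)^{T} D^2u(x_2)(\lambda v_1+(1-\lambda)v_3) \leq \lambda\, v_1^{T} D^2u(x_1) v_1 + (1-\lambda)\, v_3^{T} D^2u(x_3) v_3,
\]
whose specialisation $v_1=v_3$ recovers the inequality used in Lemma~\ref{mainlem}. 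My intention is to exploit the freedom in $(v_1,v_3)$ more fully.

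Plugging $v_1=\alpha w$ and $v_3=\beta w$ for a common direction $w\in\Rn$ and minimising the right-hand side over the scalars $(\alpha,\beta)$ constrained by $\lambda\alpha+(1-\lambda)\beta=1$ gives, wherever the denominator is positive, the fibrewise harmonic inequality
\[
w^{T} D^2u(x_2) w \leq \frac{(w^{T} D^2u(x_1) w)(w^{T} D^2u(x_3) w)}{(1-\lambda)\, w^{T} D^2u(x_1) w + \lambda\, w^{T} D^2u(x_3) w}.
\]
Writing the symmetric positive-definite matrix $A=[a^{ij}(z)]_{ij}$ spectrally as $A=\sum_k c_k w_k w_k^{T}$ with $c_k>0$ produces $b_i = \sum_k c_k\, w_k^{T} D^2u(x_i) w_k$, while $f(a,b) := ab/((1-\lambda)a+\lambda b)$ is jointly concave and positively $1$-homogeneous on the positive quadrant. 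Jensen's inequality for $f$ with weights $c_k/\sum_\ell c_\ell$, together with $1$-homogeneity, upgrades the fibrewise bound to
\[
b(x_2,u(x_2),z) \leq \frac{b_1 b_3}{(1-\lambda) b_1 + \lambda b_3}
\]
whenever $(1-\lambda) b_1+\lambda b_3>0$; in the degenerate case $b_1=b_3=0$ the arithmetic bound already forces $b(x_2,u(x_2),z)\leq 0$, matching the second branch of the definition of $\HC$.

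To close the argument, I apply the mean value theorem to $b(x_2,\cdot,z)$ on the segment $[\lambda u(x_1)+(1-\lambda) u(x_3),\,u(x_2)]$; by \eqref{mon} this gives
\[
\beta\,\C_u(x_1,x_3,\lambda) \leq b(x_2,u(x_2),z) - b\bigl(x_2,\,\lambda u(x_1)+(1-\lambda) u(x_3),\,z\bigr),
\]
and inserting the harmonic-mean bound into the right-hand side produces exactly $\beta\,\C_u(x_1,x_3,\lambda) \leq -\HC_{b(\cdot,u(\cdot),z)}(x_1,x_3,\lambda)$, as claimed. The principal obstacle is the passage from the fibrewise harmonic Hessian inequality to the trace-level bound on $b$, since the constrained minimisation presumes $w_k^{T} D^2u(x_i) w_k>0$ for $i=1,3$; I would propagate this sign condition from the positivity of $b_1,b_3$ using the ancillary estimates $w_k^{T} D^2u(x_1)w_k\geq \lambda\, w_k^{T} D^2u(x_2)w_k$ and $w_k^{T} D^2u(x_3)w_k\geq(1-\lambda)\, w_k^{T} D^2u(x_2)w_k$ (obtained by choosing $v_3=0$ or $v_1=0$ in the Hessian inequality), and by treating the sign-degenerate components separately so that they do not worsen the estimate.
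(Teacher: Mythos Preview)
Your route is viable but genuinely different from the paper's, and the obstacle you flag is real but not resolved by the hints you give. Two corrections complete your argument. First, the constrained minimisation does \emph{not} presume $q_i^{(k)}:=w_k^T D^2u(x_i)w_k>0$ for $i=1,3$: the fibrewise inequality (valid for all $\alpha,\beta'$ with $\lambda\alpha+(1-\lambda)\beta'=1$) already forces $(1-\lambda)q_1^{(k)}+\lambda q_3^{(k)}\ge 0$ for every $k$, with equality only when $q_1^{(k)}=q_3^{(k)}=0$ and then $q_2^{(k)}\le 0$. Individual positivity of $q_1^{(k)},q_3^{(k)}$ is neither available (you only know $(1-\lambda)b_1+\lambda b_3>0$, not $b_1,b_3>0$) nor needed. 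Second, $f(a,b)=ab/((1-\lambda)a+\lambda b)$ is concave on the whole open half-plane $\{(1-\lambda)a+\lambda b>0\}$, not merely the positive quadrant: its Hessian is $-\tfrac{2\lambda(1-\lambda)}{D^3}(b,-a)(b,-a)^T$ with $D=(1-\lambda)a+\lambda b$. Hence Jensen applies to the components with $D_k>0$, and the remaining components (with $q_1^{(k)}=q_3^{(k)}=0$, $q_2^{(k)}\le 0$) contribute nothing to $b_1,b_3$ and something nonpositive to $b_2$, so they only help.

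The paper, following Kennington, avoids the spectral decomposition and the Jensen step altogether. It tests the negative semidefinite $2n\times 2n$ Hessian $C$ of $\C_u$ against the positive semidefinite block matrix $B(s,t)=\begin{pmatrix} s^2A & stA\\ stA & t^2A\end{pmatrix}$, obtaining $\Tr(BC)=s^2\alpha+2st\beta+t^2\gamma\le 0$ for all $s,t\in\R$, where $\alpha,\beta,\gamma$ are the three trace quantities. The discriminant condition $\beta^2\le\alpha\gamma$ then gives directly $Q_{x_2}\big((1-\lambda)Q_{x_1}+\lambda Q_{x_3}\big)\le Q_{x_1}Q_{x_3}$ at the trace level, after which the equation and Lagrange's mean value theorem finish as in your last paragraph. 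The paper's route is shorter and sidesteps the sign bookkeeping; yours, once the half-plane concavity of $f$ is invoked, gives the same conclusion via a finer fibrewise picture.
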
 	
We follow in the next proof the main ideas from \cite[Theorem 3.1]{kenn} (another proof is given in \cite[Theorem 3.13]{kaw}).
	\begin{proof} If $x_1=x_3$, or $\lambda=0$, or $\lambda=1$ then $\C_u =0$ and there is nothing to prove. In the other cases
	as in Lemma \ref{mainlem}, we notice that we have that
	\[ Du(x_1)=Du(x_2)=Du(x_3)=:z\]
	and we name the matrix $A:= [a_{ij}(z)]$. Let us also define
	the $2n\times 2n$ matrices
	\bgs{
	 C:= [D^2 \C_u (x_1,x_3,\lambda)]  =
	 \begin{bmatrix}
	 	D^2_{x_1} \C_u(x_1,x_3,\lambda) &D^2_{x_1,x_3} \C_u (x_1,x_3,\lambda)  \\
	 	D^2_{x_1,x_3} \C_u (x_1,x_3,\lambda)  & D^2_{x_3} \C_u (x_1,x_3,\lambda)  
	 \end{bmatrix}
	 }
	  (which is negative defined since $(x_1,x_3,\lambda)$ is a maximum for $\C_u$ in the interior of its domain), and 
	\bgs{ B:=
	\begin{bmatrix} s^2 A & stA \\
					stA & t^2A
					\end{bmatrix}
					}
					for any $s,t\in \R$ (which is positive defined, since $A$ is so). We have from linear algebra arguments (see i.e. \cite[Lemma A.1]{kenn}) that $\Tr (BC)\leq 0.$ This means that
					\[ s^2 \Tr(A D^2_{x_1} \C_u ) + t^2 \Tr (A D^2_{x_3} \C_u ) + 2st \Tr (A D^2_{x_1,x_3} \C_u)\leq 0.\]
					Denoting
					\[\alpha=  \Tr(A D^2_{x_1} \C_u )  , \qquad  \gamma= \Tr (A D^2_{x_3} \C_u ),  \qquad \beta= \Tr (A D^2_{x_1,x_3}\C_u) \]
					it holds that
					\[ s^2 \alpha + t^2 \gamma +2 st \beta \leq 0,\] 
			thus 
					\eqlab{ \label{alpha} \alpha  \leq 0 , \qquad  \gamma \leq0 , \qquad \beta^2-\alpha \gamma \leq 0.
					}
	Using as in the proof of \cite[Theorem 3.1]{kenn} 
	\[Q_\eta= \sum_{i,j=1}^n a^{ij}(Du(\eta))\partial^2_{ij} u(\eta)\] 
we compute
	\bgs{
	& \alpha= \sum_{i,j}^n a^{ij}(z) \left(  \lambda ^2 \partial^2_{ij}u(x_2) - \lambda \partial^2_{ij}u(x_1) \right) = \lambda^2 Q_{x_2} - \lambda Q_{x_1}
	\\
	&\gamma=  \sum_{i,j}^n a^{ij}(z) \left(  (1-\lambda)^2 \partial^2_{ij}u(x_2) -(1- \lambda) \partial^2_{ij}u(x_3) \right) = (1- \lambda)^2 Q_{x_2} -(1- \lambda) Q_{x_3}
	\\
	&\beta  =\lambda(1-\lambda) \sum_{i,j}^n a^{ij}(z)  \partial^2_{ij}u(x_2)  = \lambda(1-\lambda)Q_{x_2}.
	}
		This together with \eqref{alpha} leads to 
		\[ Q_{x_2} ( \lambda Q_{x_3} +(1-\lambda)Q_{x_1} ) \leq Q_{x_1}Q_{x_3}, \qquad Q_{x_2}\leq \frac{1}{\lambda} Q_{x_1}, \qquad Q_{x_2} \leq \frac{1}{1-\lambda}Q_{x_3}.\]
		If $\lambda Q_{x_3} +(1-\lambda)Q_{x_1} \leq 0$ then
		\[ Q_{x_1}Q_{x_3} \geq Q_{x_2} ( \lambda Q_{x_3} +(1-\lambda)Q_{x_1} ) \geq  \frac{1}{\lambda} Q_{x_1}  ( \lambda Q_{x_3} +(1-\lambda)Q_{x_1} )  = Q_{x_1}Q_{x_3}+\frac{1-\lambda}{\lambda} Q^2_{x_1} \]
		hence $Q_{x_1}=0$, and in the same way $Q_{x_3}=0$. Then it can happen that either
		\eqlab{ \label{un} \lambda Q_{x_3} +(1-\lambda)Q_{x_1} \leq 0 \quad  \implies \quad Q_{x_1}=Q_{x_3}=0,\,\,\, Q_{x_2} \leq 0,}
		  or 
		 \eqlab{ \label{du}  \lambda Q_{x_3} +(1-\lambda)Q_{x_1} > 0 \quad  \implies \quad 
			Q_{x_2} \leq \frac{Q_{x_1} Q_{x_3}}{(1-\lambda)Q_{x_1} +\lambda Q_{x_3}},}
				(see also \cite[(3.5)]{kenn}, but we remark that the notations and signs there are different). By using the equation \eqref{eqnu} it holds that
	\[Q_{\eta}= b(\eta,u(\eta),Du(\eta)) \quad \mbox{ for } \eta=x_1,x_2,x_3,\]
	hence we get in the case  \eqref{du} 
	\bgs{
		b(x_2, u(x_2),z) \leq \frac{b(x_1,u(x_1),z) b(x_3,u(x_3),z)}
		{(1-\lambda) b(x_1,u(x_1),z) + \lambda b(x_3, u(x_3), z)}.
}
Then 
\bgs{
	&b(x_2, u(x_2),z) - b(x_2, \lambda u(x_1)+ (1-\lambda) u(x_3)
	  ,z) \\
	  \leq&\;  \frac{b(x_1,u(x_1),z) b(x_3,u(x_3),z)}
		{(1-\lambda) b(x_1,u(x_1),z) + \lambda b(x_3, u(x_3), z)} - b(x_2, \lambda u(x_1)+ (1-\lambda) u(x_3)
	  ,z)
	  \\
	   = &\; -
	   { \HC_{b(\cdot,u(\cdot),z)}(x_1,x_3, \lambda) }
	   .
	}
		By Lagrange's mean value theorem, there exists some $\xi \in [u(x_2),  \lambda u(x_1)+ (1-\lambda) u(x_3)]$ such that
	\[  \partial_u b(x_2, \xi, z) \C_u (x_1,x_3,\lambda) \leq -
	{ \HC_{b(\cdot,u(\cdot),z)}(x_1,x_3, \lambda) }
	.\]
	Notice also that in the case \eqref{un}, since $Q_{x_2} \leq 0$, one gets that
	\[  \partial_u b(x_2, \xi, z) \C_u (x_1,x_3,\lambda) \leq  - b(x_2, \lambda u(x_1)+ (1-\lambda) u(x_3)
	  ,z) =-
	  { \HC_{b(\cdot,u(\cdot),z)}(x_1,x_3, \lambda) }
	  .\] 
	Since $\C_u (x_1,x_3,\lambda)>0$, in any case it follows that
	\[ \beta \C_u (x_1,x_3,\lambda) \leq \partial_u b(x_2, \xi, z) \C_{u} (x_1,x_3,\lambda) \leq -
	{ \HC_{b(\cdot,u(\cdot),z)}(x_1,x_3, \lambda) }
	,\]
	therefore
	\[ \C_{u} (x_1,x_3,\lambda) \leq - \frac{
	{ \HC_{b(\cdot,u(\cdot),z)}(x_1,x_3, \lambda) }
	}\beta .\]
		This concludes the proof of the Lemma.
	\end{proof}
With the aid of this Lemma, we can obtain the second $\delta$-convexity principle, that we state in the next rows.
\begin{theorem}[$\delta$-Convexity Principle II] 
	\label{thm3}
	\label{second}	Let $u\in C^2(\Omega)\cap C(\bar\Omega)$ be a solution of \eqref{eqnu} and set $M:=\|u\|_{C^2(\Omega)}$, $m=\|u\|_{L^\infty(\Omega)}$.
For some $\delta \geq 0$ and $\beta>0$ we assume that condition \eqref{symmatr} holds, and furthermore, that
		\bgs{ \label{ah2}
			   \partial_s b(x,s,z)  \geq \, \beta,& \; \mbox{ for any  } (x,s ,z) \in \Omega \times[-m,m] \times \bar B_M,
			 }
						 \bgs{ \label{ah2}	  
			 			  {  \HC_{b(\cdot,u(\cdot),z)}(x_1,x_3, \lambda) } 
			  \geq &-\delta,  \; \text{ for all }  (x_1,x_3,\lambda)\in \Omega\times \Omega \times [0,1] 
			  \mbox{ for which one of }  
			  \\ & \quad \quad\mbox{conditions \eqref{hcon} holds, and all } z\in \bar B_M
			  .
			}
	Then, if $\mathcal C_{u}$ achieves a positive interior maximum in $\Omega\times \Omega \times [0,1]$,  
	there exist a convex function $v\colon \Omega \to \R$ and $k_n>0$
		 such that 
	\[ \|u-v\|_{L^\infty(\Omega)}\leq \frac{k_n}{ \beta} \delta
	.\]	
\end{theorem}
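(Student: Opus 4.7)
The plan is to combine Lemma \ref{mainlem2} with the Hyers-Ulam Proposition \ref{Ulam}, in complete analogy with how Theorem \ref{first} was deduced from Lemma \ref{mainlem} and Proposition \ref{Ulam}. The monotonicity hypothesis on $\partial_s b$ (together with $Du(x_1) \in \bar B_M$ and the values of $u$ staying in $[-m,m]$) already supplies the assumption \eqref{mon} of Lemma \ref{mainlem2}, while \eqref{symmatr} is postulated directly.

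First, I would let $(x_1, x_3, \lambda) \in \Omega \times \Omega \times [0,1]$ denote the interior point at which $\C_u$ attains its positive maximum. Applying Lemma \ref{mainlem2} at this point yields
\[
\C_u(x_1,x_3,\lambda) \;\leq\; -\frac{\HC_{b(\cdot,u(\cdot),Du(x_1))}(x_1,x_3,\lambda)}{\beta}.
\]
Before invoking the hypothesis \eqref{ah2}, I have to check that one of the compatibility conditions \eqref{hcon} is satisfied at this triple so that the harmonic convexity function is actually defined. This, however, comes for free by inspecting the case dichotomy inside the proof of Lemma \ref{mainlem2}: either \eqref{un} forces $b(x_1,u(x_1),Du(x_1)) = b(x_3,u(x_3),Du(x_1))=0$, which is the second alternative in \eqref{hcon}, or \eqref{du} gives exactly $(1-\lambda)b(x_1,u(x_1),Du(x_1)) + \lambda b(x_3,u(x_3),Du(x_1))>0$, which is the first alternative in \eqref{hcon}. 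Consequently the hypothesis \eqref{ah2} applies with $z=Du(x_1) \in \bar B_M$, giving $\HC_{b(\cdot,u(\cdot),Du(x_1))}(x_1,x_3,\lambda) \geq -\delta$, and therefore
\[
\C_u(x_1,x_3,\lambda) \;\leq\; \frac{\delta}{\beta}.
\]

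Since $(x_1,x_3,\lambda)$ is the global maximum of $\C_u$ over $\bar\Omega\times\bar\Omega\times[0,1]$, this bound propagates to every triple, so $u$ is $(\delta/\beta)$-convex on $\bar\Omega$. Applying Proposition \ref{Ulam} (Hyers--Ulam) to the convex set $\Omega\subset\Rn$ then produces a convex function $v\colon\Omega\to\R$ with $\|u-v\|_{L^\infty(\Omega)}\leq k_n\,\delta/\beta$, which is precisely the claim. The only subtle point in the whole plan is the verification that the harmonic convexity function is well-defined at the interior maximum, and this is essentially free from the case analysis already carried out in Lemma \ref{mainlem2}; the remainder is a routine assembly of previously established results.
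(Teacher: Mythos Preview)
Your proposal is correct and follows exactly the paper's approach: the paper's proof is the single sentence ``The proof is a consequence of Lemma \ref{mainlem2} and Proposition \ref{Ulam},'' and you have simply unpacked that sentence, including the (helpful) verification that the case dichotomy \eqref{un}/\eqref{du} in Lemma \ref{mainlem2} guarantees one of the conditions \eqref{hcon} so that hypothesis \eqref{ah2} applies at the interior maximum.
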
 	
\begin{proof}
The proof is a consequence of Lemma \ref{mainlem2} and Proposition \ref{Ulam}. 
\end{proof}

	Theorem \ref{thm3} says that under the assumption that the function
	$\C_u$ achieves a positive interior maximum, then the {\em approximate  harmonic concavity} of $b$ and the strict monotonicity of $b$ ($b$ needs to be strictly  increasing) 
	yields in turn the {\em approximate convexity} of the solution $u$.

\section{Boundary constraints}\label{bdry}
In this section, we present some results that will allow us to exclude the possibility that the maximum of the convexity function of the  solution to \eqref{eqnu} is reached on the boundary.
Let us mention that a general framework for boundary constraints is given in \cite[Lemma 2.1]{kore}. We focus here on some particular cases, that will allow us to apply in a simple way our approximate convexity principles.
We recall that the definition of boundary point for the convexity function $\C_u$ is given in Definition \ref{x2}.

\begin{proposition}
	\label{boundary}
	Let $n\geq 2$, $\Omega$ a bounded convex domain of $\R^n$  and $u\in C(\bar\Omega)$ such that $u=0$ on $\partial\Omega$, $u>0$ in $\Omega$ and	
	 for every $y\in\partial\Omega$ and any $z\in \Omega$, there holds
	\begin{equation}
	\label{cond}
	\limsup_{t\to 0^+} t^{-1/\alpha}u(y+t(z-y))>u(z).
	\end{equation}
	Then for any $\alpha\in (0,1)$ the function $\mathcal C_{-u^\alpha}$
	cannot achieve the positive maximum on the boundary.
\end{proposition}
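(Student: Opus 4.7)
The plan is to argue by contradiction: suppose $\mathcal{C}_{-u^\alpha}$ attains its (strictly positive) maximum at a boundary triple $(x_1,x_3,\lambda_0)$ in the sense of Definition~\ref{x2}. First I would rule out all the trivial configurations using that $u=0$ on $\partial\Omega$ together with the convexity of $\Omega$. If $\lambda_0 \in \{0,1\}$, the convexity function is automatically zero. If $\lambda_0\in(0,1)$ and both $x_1,x_3\in\partial\Omega$, then $\mathcal{C}_{-u^\alpha}(x_1,x_3,\lambda_0)=-u^\alpha(x_2)\le 0$. Finally, if $\lambda_0\in(0,1)$ and $x_2\in\partial\Omega$, the standard fact that for an open convex $\Omega$ the combination $\lambda y+(1-\lambda)z$ lies in $\Omega$ whenever $y\in\bar\Omega$, $z\in\Omega$ and $\lambda\in[0,1)$ forces both $x_1$ and $x_3$ to lie in $\partial\Omega$, reducing to the previous case. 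After applying the symmetry $(y_1,y_3,\lambda)\mapsto(y_3,y_1,1-\lambda)$, the only configuration left to exclude is $x_1\in\partial\Omega$, $x_3\in\Omega$, $\lambda_0\in(0,1)$, with $\mathcal{C}_{-u^\alpha}(x_1,x_3,\lambda_0)>0$.

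In this surviving case the key move is to perturb $x_1$ slightly toward $x_3$ while simultaneously rescaling $\lambda_0$ so that the midpoint $x_2$ is preserved. For $\eps\in(0,1-\lambda_0)$ I set
\[
x_1^{\eps}:=(1-\eps)x_1+\eps x_3\in\Omega, \qquad \lambda_\eps:=\frac{\lambda_0}{1-\eps}\in(0,1),
\]
and check the identity $\lambda_\eps x_1^{\eps}+(1-\lambda_\eps)x_3=\lambda_0 x_1+(1-\lambda_0)x_3=x_2$. Using $u^\alpha(x_1)=0$, a direct algebraic computation then yields
\[
\mathcal{C}_{-u^\alpha}(x_1^{\eps},x_3,\lambda_\eps)-\mathcal{C}_{-u^\alpha}(x_1,x_3,\lambda_0)=\frac{\lambda_0}{1-\eps}\bigl[u^\alpha(x_1+\eps(x_3-x_1))-\eps\,u^\alpha(x_3)\bigr],
\]
so the whole comparison is reduced to the single bracketed quantity on the right.

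Finally, I would invoke hypothesis \eqref{cond} with $y=x_1$ and $z=x_3$ to find a sequence $\eps_k\to 0^+$ along which $u(x_1+\eps_k(x_3-x_1))>\eps_k^{1/\alpha}u(x_3)$; raising both sides to the $\alpha$-th power (permitted since both are positive and $u>0$ in $\Omega$) turns the bracket above into a strictly positive quantity for these $\eps_k$. For $k$ large enough that $\eps_k<1-\lambda_0$, the triple $(x_1^{\eps_k},x_3,\lambda_{\eps_k})$ is admissible and satisfies $\mathcal{C}_{-u^\alpha}(x_1^{\eps_k},x_3,\lambda_{\eps_k})>\mathcal{C}_{-u^\alpha}(x_1,x_3,\lambda_0)$, contradicting maximality. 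The main obstacle is identifying the correct perturbation: moving $x_1$ inward in an uncompensated way would also move the only negative contribution $-u^\alpha(x_2)$ and spoil the comparison, so one has to freeze $x_2$ by the paired rescaling $\lambda_0\mapsto\lambda_0/(1-\eps)$; once this setup is in place, the boundary behavior recorded in \eqref{cond} does the rest.
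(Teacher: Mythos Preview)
Your argument is correct and follows essentially the same route as the paper: after ruling out the degenerate boundary configurations, you perturb the boundary point $x_1$ inward toward $x_3$ while rescaling $\lambda$ so as to freeze the midpoint $x_2$, and then use \eqref{cond} to produce a strictly larger value of $\mathcal C_{-u^\alpha}$. The paper's proof is the same construction with the notation $\xi=\tau z+(1-\tau)y$, $\mu=(\lambda-\tau)/(1-\tau)$ in place of your $x_1^{\eps}$, $\lambda_\eps$.
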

\begin{proof}
	Assume by contradiction that the positive maximum of the
	function $-\mathcal C_{u^\alpha}$ is achieved at a boundary  
	 point $(y,z,\lambda )$.
	
	\noindent Notice that if at least two of the points $y,z , \lambda y+(1-\lambda)z$ are on $\partial \Omega$ then 
	$\mathcal C_{-u^\alpha}\leq 0$,
	hence there is nothing to prove.  In view of the previous consideration, we can reduce to the case in which 
	$y\in\partial\Omega$ and $z\in \Omega$ (the fact that $y,z\in \Omega$ and $\lambda y+(1-\lambda) z \in \partial \Omega$ is excluded by the convexity of the domain).
	 
\noindent Now, the condition \eqref{cond} is equivalent to 
$$
\limsup_{t\to 0^+} \frac{u^\alpha(y+t(z-y))}{t}>u^\alpha(z).
$$

\noindent There exists $\tau\in (0,\lambda)$ sufficiently small 
that $u^\alpha(\tau z +(1-\tau)y)>\tau u^\alpha(z).$ Then, setting
$$
\xi:=\tau z +(1-\tau)y \in\Omega,\qquad \mu:=\frac{\lambda-\tau}{1-\tau}\in (0,1)
$$
we have $\mu z +(1-\mu) \xi =\lambda z +(1-\lambda)y$ and 
\begin{align*}
\mathcal C_{-u^\alpha}(z,\xi,\mu)&=\mu u^\alpha(z) +
(1-\mu) u^\alpha(\xi)-u^\alpha(\mu z+(1-\mu)\xi), \\
&=
\mu u^\alpha(z) +(1-\mu) u^\alpha(\xi)-u^\alpha(\lambda z+(1-\lambda)y), \\
&>(\mu+(1-\mu)\tau ) u^\alpha(z)-u^\alpha(\lambda z+(1-\lambda)y)
\\
&=\lambda u^{\alpha}(z) -u^\alpha(\lambda z+(1-\lambda)y)
 =\mathcal C_{-u^{\alpha}} (z,y,\lambda),
\end{align*}
which yields a contradiction.
\end{proof}
%

The next result will be very useful in applications. We will denote by $\frac{\partial u}{\partial n}$
the normal derivative where $n$ stands for the outer normal vector at the boundary.

\begin{corollary}
	\label{boundary3}
	Let $n\geq 2$, $\Omega$ a bounded convex domain of $\R^n$  and $u\in C^1(\bar\Omega)$ such that
	$$
	\text{$u>0$ on $\Omega$, \qquad $u=0$ on $\partial\Omega$, \qquad  $\frac{\partial u}{\partial n}<0$ on $\partial\Omega$}.
	$$
	Then for any $\alpha\in (0,1)$ the function $\mathcal C_{-u^\alpha}$
	 cannot achieve the positive 
	 maximum on  the boundary.
\end{corollary}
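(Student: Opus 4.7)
The plan is to deduce this corollary directly from Proposition \ref{boundary}: it suffices to verify that the boundary condition \eqref{cond} holds for every $y \in \partial\Omega$ and every $z \in \Omega$, for every $\alpha \in (0,1)$.

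Fix such $y$ and $z$. Since $u \in C^1(\bar\Omega)$ and $u \equiv 0$ on $\partial\Omega$, the tangential component of $\nabla u$ at $y$ vanishes, so $\nabla u(y) = \frac{\partial u}{\partial n}(y)\, n(y)$, where $n(y)$ denotes the outer unit normal at $y$. A first-order Taylor expansion, together with $u(y)=0$, yields
\[
u(y + t(z-y)) \;=\; t\, \frac{\partial u}{\partial n}(y)\, n(y) \cdot (z-y) \;+\; o(t), \qquad t \to 0^+.
\]

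Next, since $\Omega$ is convex, $y \in \partial\Omega$, and $z \in \Omega$ is an interior point, $z$ cannot lie on the supporting hyperplane at $y$; hence $n(y) \cdot (z-y) < 0$ strictly. Combined with $\frac{\partial u}{\partial n}(y) < 0$, we obtain
\[
c \;:=\; \frac{\partial u}{\partial n}(y)\, n(y) \cdot (z-y) \;>\; 0,
\]
so $u(y + t(z-y)) = ct + o(t)$ as $t \to 0^+$. Multiplying by $t^{-1/\alpha}$ and using that $\alpha \in (0,1)$ implies $1 - 1/\alpha < 0$, we get
\[
t^{-1/\alpha}\, u(y + t(z-y)) \;=\; c\, t^{1 - 1/\alpha} + o\bigl(t^{1 - 1/\alpha}\bigr) \;\longrightarrow\; +\infty
\]
as $t \to 0^+$. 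In particular this quantity eventually exceeds $u(z)$, giving \eqref{cond}, and Proposition \ref{boundary} yields the conclusion.

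The only step that requires real care is the strict inequality $n(y) \cdot (z-y) < 0$: convexity of $\Omega$ together with the supporting-hyperplane property at $y$ gives $\leq 0$, and the strict version follows because the supporting hyperplane meets $\bar\Omega$ only in $\partial\Omega$, so an interior point $z$ cannot lie on it. Everything else is a Taylor expansion tailored to the exponent $1/\alpha > 1$.
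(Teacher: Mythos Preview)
Your proof is correct and follows essentially the same approach as the paper: both reduce to Proposition~\ref{boundary} by showing that $Du(y)\cdot(z-y)>0$, whence $t^{-1/\alpha}u(y+t(z-y))\sim Du(y)\cdot(z-y)\,t^{(\alpha-1)/\alpha}\to+\infty$. Your version is simply more explicit in justifying the strict positivity of $Du(y)\cdot(z-y)$ via the supporting-hyperplane argument, which the paper leaves to the phrase ``by convexity of $\Omega$ and $\frac{\partial u}{\partial n}<0$''.
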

\begin{proof}
	In light of Proposition \ref{boundary}, it is enough to 
	prove that for any $y\in\partial\Omega$ and $z\in\Omega$, it holds
	\begin{equation*}
	\limsup_{t\to 0^+} t^{-1/\alpha}u(y+t(z-y))>u(z).
	\end{equation*}
	Since $\alpha\in (0,1)$, by convexity of $\Omega$ and $\frac{\partial u}{\partial n}<0$ we have
	\begin{align*}
	\limsup_{t\to 0^+} t^{-1/\alpha}u(y+t(z-y)) & =\limsup_{t\to 0^+} t^{(\alpha-1)/\alpha}
	\frac{u(y+t(z-y))-u(y)}{t} \\
	& =D u(y)\cdot (z-y)\lim_{t\to 0^+} t^{(\alpha-1)/\alpha}=+\infty,
	\end{align*}
	which yields the assertion.
	\end{proof}

Let us also mention that:
\begin{lemma}
	\label{boundary2}
	Let $n\geq 2$, $\Omega$ a bounded convex domain of $\R^n$  and $u\in C^1(\bar\Omega)$ such that $u=0$ on $\partial\Omega$, $u>0$ in $\Omega$ and
	\begin{equation}
	\label{cond2}
	\liminf_{t\to 0^+} t^{-1/\alpha}u(y+t(z-y))=0.
	\end{equation}
	for some $y\in\partial\Omega$ and $z\in\Omega$.
	Then there exists $\delta>0$ such that $u^\alpha$ is not $\delta$-concave.
\end{lemma}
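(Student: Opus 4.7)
The strategy is to use the hypothesis \eqref{cond2} directly to exhibit a single triple $(y_1,y_3,\lambda)$ at which $\C_{-u^\alpha}$ is strictly positive, and then read off $\delta$ from the value there. No supremum or limiting argument on the set of $\delta$'s is needed: one explicit point where $\C_{-u^\alpha}$ exceeds a positive number is enough to conclude.

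I take $y_1 = z$, $y_3 = y$ and let $\lambda = t$ play the role of the small parameter. Setting $\xi_t := y + t(z-y)$ and using the boundary condition $u(y) = 0$, the defining expression collapses to
\[
\C_{-u^\alpha}(z,y,t) \;=\; t\, u^\alpha(z) + (1-t)\,u^\alpha(y) - u^\alpha(\xi_t) \;=\; t\, u^\alpha(z) - u^\alpha(\xi_t).
\]
The hypothesis \eqref{cond2} then produces a sequence $t_n \to 0^+$ with $t_n^{-1/\alpha} u(\xi_{t_n}) \to 0$. Since $u(z) > 0$, one may pass to a tail of this sequence on which $t_n^{-1/\alpha} u(\xi_{t_n}) < u(z)/2^{1/\alpha}$, i.e.\ $u^\alpha(\xi_{t_n}) < t_n\, u^\alpha(z)/2^{\alpha}$. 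Substituting back yields
\[
\C_{-u^\alpha}(z, y, t_n) \;>\; t_n\, u^\alpha(z)\,\bigl(1 - 2^{-\alpha}\bigr) \;>\; 0
\]
for every such $n$.

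Finally I fix one such index $n_0$ and set $\delta := \tfrac12\,\C_{-u^\alpha}(z, y, t_{n_0}) > 0$. By construction, $\C_{-u^\alpha}$ strictly exceeds $\delta$ at $(z, y, t_{n_0})$, so $-u^\alpha$ fails to be $\delta$-convex on $\bar\Omega$; equivalently, $u^\alpha$ is not $\delta$-concave, which is the desired conclusion. There is no substantive obstacle here: the argument only uses the definitions, the boundary condition $u(y)=0$ (which makes the convexity function evaluated along the segment $[y,z]$ collapse to a two-term expression), and a direct quantitative reading of the $\liminf$ hypothesis.
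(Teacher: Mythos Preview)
Your proof is correct and follows essentially the same approach as the paper: both evaluate $\C_{-u^\alpha}$ at the triple $(z,y,t)$ using $u(y)=0$ to reduce it to $t\,u^\alpha(z)-u^\alpha(y+t(z-y))$, and then invoke \eqref{cond2} to make this positive for some small $t$. The paper phrases this as a contradiction argument (assuming $\delta$-concavity for all $\delta$ and deriving a uniform lower bound on $t^{-1/\alpha}u(y+t(z-y))$), whereas you argue directly; note also a harmless slip in your constants, since raising $t_n^{-1/\alpha}u(\xi_{t_n})<u(z)/2^{1/\alpha}$ to the $\alpha$ power gives $u^\alpha(\xi_{t_n})<t_n\,u^\alpha(z)/2$ rather than $t_n\,u^\alpha(z)/2^\alpha$, but this does not affect the conclusion.
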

\begin{proof}
	Assume by contradiction that for every $\delta>0$, $u^\alpha$ is $\delta$-concave. Then, since $u(y)=0$, we have
	$$
	u^\alpha(y+t(z-y))\geq tu^\alpha(z)-\delta.
	$$
	Letting $\delta=t\delta_0$ with $\delta_0\in (0,u^\alpha(z))$ and dividing by $t$ yields
	$$
	t^{-1/\alpha}u(y+t(z-y))\geq (u^\alpha(z)-\delta_0)^{1/\alpha},
	$$
	which gives a contradiction as $t$ goes to zero.
	\end{proof}



	  For the next lemma we refer the reader to 
	\cite[Lemma 3.12]{kaw}.
	\begin{lemma}\label{kawo}
	Let $\Omega \subset \Rn $ be a bounded and strictly convex domain (i.e., if $x_1\neq x_3\in \partial \Omega$ then $x_2\in \Omega$) with boundary of class $C^1.$ Let $u\in C^1(\bar\Omega)$ such that
	$$
	\text{$u>0$ on $ \Omega$, \qquad $u=0$ on $\partial\Omega$, \qquad  $\frac{\partial u}{\partial n}<0$ on $\partial\Omega$}.
	$$Let $f\colon \R^+\to \R$ be a $C^1$ functions that satisfies
	\begin{equation} 
	\label{Kawcond}
		f' <0 \qquad \mbox{ and } \lim_{u\searrow 0} f'(u)=-\infty.
		\end{equation}
	Then $\C_{f(u)}$ cannot achieve a positive maximum on the boundary.
 	\end{lemma}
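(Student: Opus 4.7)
The plan is to argue by contradiction: assume $\C_{f(u)}$ attains a positive maximum at a boundary triple $(y_1,y_3,\lambda)$ in the sense of Definition \ref{x2}. Set $L:=\lim_{s\to 0^+}f(s)\in(-\infty,+\infty]$; this limit exists since $f'<0$ on $\R^+$. The degenerate parameters $\lambda\in\{0,1\}$ trivially give $\C_{f(u)}\equiv 0$, so we may take $\lambda\in(0,1)$. Moreover, because $\Omega$ is open and convex, a configuration with $y_1,y_3\in\Omega$ but $y_2\in\partial\Omega$ is impossible. Two cases remain.

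If both $y_1,y_3\in\partial\Omega$, strict convexity of $\Omega$ forces $y_2\in\Omega$, so $u(y_2)>0$ and, by continuous extension of $f(u)$ to the boundary,
\bgs{
\C_{f(u)}(y_1,y_3,\lambda)=f(u(y_2))-L,
}
which is strictly negative when $L<+\infty$ (since $f$ is strictly decreasing and so $f(u(y_2))<L$) and equals $-\infty$ when $L=+\infty$; either way positivity of the maximum is contradicted. Thus the remaining case is $y_1\in\partial\Omega$ and $y_3\in\Omega$, in which $y_2\in\Omega$ as well.

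The main argument is to perturb $y_1$ along the segment joining $y_1$ to $y_3$: set $y_1(t):=y_1+t(y_3-y_1)$ and $y_2(t):=\lambda y_1(t)+(1-\lambda)y_3$, both lying in $\Omega$ for $t\in(0,1]$ by convexity. Define
\bgs{
g(t):=f(u(y_2(t)))-\lambda f(u(y_1(t)))-(1-\lambda)f(u(y_3)),\qquad t\in(0,1].
}
If $L=+\infty$ then $g(t)\to-\infty$ as $t\to 0^+$, already contradicting the positive maximum; so assume $L<+\infty$, in which case $g$ extends continuously to $t=0$ with $g(0)=\C_{f(u)}(y_1,y_3,\lambda)$. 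Differentiating,
\bgs{
g'(t)=\lambda f'(u(y_2(t)))\,Du(y_2(t))\cdot(y_3-y_1)-\lambda f'(u(y_1(t)))\,Du(y_1(t))\cdot(y_3-y_1).
}
The first summand remains bounded as $t\to 0^+$ since $u(y_2)>0$ and $f'$ is continuous there. For the second summand, the boundary conditions give $Du(y_1)=\frac{\partial u}{\partial n}(y_1)\,n(y_1)$ (tangential derivatives of $u$ vanish on $\partial\Omega$), while convexity of $\Omega$ yields $n(y_1)\cdot(y_3-y_1)<0$ for $y_3\in\Omega$; combined with $\frac{\partial u}{\partial n}(y_1)<0$ this gives $Du(y_1)\cdot(y_3-y_1)>0$, a strict positivity preserved by continuity for small $t$. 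Since $u(y_1(t))\to 0^+$, hypothesis \eqref{Kawcond} forces $-f'(u(y_1(t)))\to+\infty$, whence $g'(t)\to+\infty$. Consequently $g$ is strictly increasing on some interval $(0,t_0)$, so $g(t_0)>g(0)=\C_{f(u)}(y_1,y_3,\lambda)$, and since $(y_1(t_0),y_3,\lambda)$ is an interior triple this contradicts the assumed boundary maximality.

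The subtle step is the passage to the limit in the evaluation of $\C_{f(u)}$ at the boundary triple, which forces the dichotomy on $L$; in the case $L=+\infty$ the claim is immediate, and in the case $L<+\infty$ the key ingredient is the interplay between the Hopf-type positivity $\frac{\partial u}{\partial n}<0$ (so that the inward directional derivative of $u$ at $y_1$ is strictly positive) and the blow-up $f'(u)\to-\infty$ of \eqref{Kawcond}, which together produce the divergence of the one-sided derivative of $\C_{f(u)}$ at the boundary.
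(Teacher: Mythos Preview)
Your argument is correct. The paper does not give its own proof of this lemma; it simply refers the reader to \cite[Lemma 3.12]{kaw}. Your self-contained proof follows the same perturbation strategy the paper uses for the power case in Proposition~\ref{boundary} and Corollary~\ref{boundary3}: push the boundary point $y_1$ slightly inward along the segment toward $y_3$ and show the convexity function strictly increases, contradicting maximality. The mechanism you use---the blow-up $f'(u)\to-\infty$ combined with the strictly positive inward directional derivative $Du(y_1)\cdot(y_3-y_1)>0$ coming from $\partial u/\partial n<0$ and convexity of $\Omega$---is exactly the expected adaptation of that argument to a general transformation $f$ satisfying \eqref{Kawcond}, and is in the spirit of Kawohl's original proof. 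Your case analysis (degenerate $\lambda$, both endpoints on $\partial\Omega$ via strict convexity, and the dichotomy on $L=\lim_{s\to0^+}f(s)$) is complete and handles the possible ill-definedness of $f(u)$ at the boundary cleanly.
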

	
	For instance $f(s)=-\log s$ satisfies conditions \eqref{Kawcond}.
	
\section{$\delta$-concave solutions}\label{examples}
In this section, we give some applications of the $\delta$-convexity principles established in Theorems \ref{first} and \ref{thm3}.

The next results is a meaningful application of our general
	 results. It contains in particular semi-linear eigenvalue problems.

\begin{theorem}[$f$-convex solutions]
	\label{fconv}
	Let $n\geq 2$,  $f\in C^2(\R^+)$ be such that it satisfies \eqref{Kawcond} and
	in addition, that
	$$
	\text{the function
	$s\mapsto \frac{f''(f^{-1}(s))}{[f'(f^{-1}(s))]^2}$ is increasing and concave.}
	$$ 
	Let $\Omega$ be a $C^1$ bounded strictly convex 
	domain of $\R^n$ and $u\in C^2(\Omega)\cap C(\bar\Omega)$ be a solution to $u=0$ and 
	$\partial u/\partial n<0$ on $\partial\Omega,$ $u>0$ in $\Omega$ and
	$$
	\sum_{i,j=1}^{n}a_{ij}(-f'(u)Du)\partial^2_{ij}u=\frac{1}{f'(u)}b(x,f(u),-f'(u)Du) \qquad \text{in $\Omega$}.
	$$
	We suppose furthermore that
	denoting $M=\|u\|_{C^2(\Omega)}$, $m=\|u\|_{L^\infty(\Omega)}$, there exists $\beta>0, \delta \geq 0$ such that
	\bgs{ \label{ah1-ext}
			& \partial_s b(x,s,z)
		\geq  \beta \mbox{ for any  } (x,s ,z) \in \Omega \times[f(m),\lim_{x\searrow 0} f(x)) \times \Rn ,
		\\
		&
		{{ \sup_{ (x_1,x_3,\lambda)\in \Omega\times\Omega 
		 \times [0,1]} \C_{-b(\cdot, f(u(\cdot)),z)}
			(x_1,x_3,\lambda) }}
		\leq \delta, \quad \text{for all } z\in \Rn
		.
	}  
	Then there exists a 
	convex function $v\colon \Omega \to \R$  and $k_n>0$ such that
	\[	\|f(u)-v\|_{L^\infty(\Omega)} \leq \frac{k_n\delta}\beta .\]
\end{theorem}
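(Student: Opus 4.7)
The plan is to reduce to Lemma \ref{mainlem} (and hence Theorem \ref{first}) via the change of variable $w:=f(u)$, and then derive the PDE satisfied by $w$. Since $Dw=f'(u)Du$, one has $a_{ij}(-f'(u)Du)=a_{ij}(-Dw)$, and the chain rule gives $\partial^2_{ij}u=\frac{1}{f'(u)}\partial^2_{ij}w-\frac{f''(u)}{[f'(u)]^3}\partial_i w\,\partial_j w$. Substituting into the PDE and multiplying through by $f'(u)$, one obtains that $w$ solves
\[
\sum_{i,j=1}^n \tilde a^{ij}(Dw)\,\partial^2_{ij}w \;-\; \tilde b(x,w,Dw)\;=\;0 \qquad \text{in } \Omega,
\]
where $\tilde a^{ij}(z):=a_{ij}(-z)$, $Q(z):=\sum_{i,j}\tilde a^{ij}(z)z_iz_j\geq 0$, and
\[
\tilde b(x,s,z):=b(x,s,-z)+\phi(s)\,Q(z), \qquad \phi(s):=\frac{f''(f^{-1}(s))}{[f'(f^{-1}(s))]^2}.
\]
The matrix $[\tilde a^{ij}]$ inherits symmetry and positive definiteness from $A$, so \eqref{symmatr} holds for $\tilde A$.

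Next I would verify the hypotheses of Lemma \ref{mainlem} at any positive interior maximum $(x_1,x_3,\lambda)$ of $\C_w$. The monotonicity condition \eqref{mon} follows from
\[
\partial_s\tilde b(x,s,z)\;=\;\partial_s b(x,s,-z)+\phi'(s)\,Q(z)\;\geq\;\beta,
\]
using $\partial_s b\geq\beta$, $\phi'\geq 0$ (as $\phi$ is increasing) and $Q\geq 0$. For the convexity-function bound, linearity yields
\[
\C_{-\tilde b(\cdot,w(\cdot),z)}(x_1,x_3,\lambda)
=\C_{-b(\cdot,w(\cdot),-z)}(x_1,x_3,\lambda)+Q(z)\,\C_{-\phi(w(\cdot))}(x_1,x_3,\lambda).
\]
The first term is $\leq\delta$ by the assumption on $b$ (applied with $z$ replaced by $-z$). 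For the second, since $\C_w(x_1,x_3,\lambda)>0$ means $w(x_2)>\lambda w(x_1)+(1-\lambda)w(x_3)$, the monotonicity and concavity of $\phi$ combine to give
\[
\phi(w(x_2))\;\geq\;\phi\bigl(\lambda w(x_1)+(1-\lambda)w(x_3)\bigr)\;\geq\;\lambda\phi(w(x_1))+(1-\lambda)\phi(w(x_3)),
\]
hence $\C_{-\phi(w(\cdot))}(x_1,x_3,\lambda)\leq 0$ and, since $Q\geq 0$, the whole second term is nonpositive. Thus $\C_{-\tilde b(\cdot,w(\cdot),Dw(x_1))}(x_1,x_3,\lambda)\leq\delta$.

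Finally, because $f$ satisfies \eqref{Kawcond}, Lemma \ref{kawo} rules out positive maxima of $\C_w=\C_{f(u)}$ on $\partial\Omega$. Therefore either $\C_w\leq 0$ on $\bar\Omega\times\bar\Omega\times[0,1]$ (in which case $w$ is convex), or the supremum is positive and attained at an interior point, where Lemma \ref{mainlem} gives
\[
\C_w(x_1,x_3,\lambda)\;\leq\;\frac{1}{\beta}\,\C_{-\tilde b(\cdot,w(\cdot),Dw(x_1))}(x_1,x_3,\lambda)\;\leq\;\frac{\delta}{\beta}.
\]
In both cases $w=f(u)$ is $(\delta/\beta)$-convex on $\bar\Omega$, and Proposition \ref{Ulam} produces a convex $v\colon\Omega\to\R$ with $\|f(u)-v\|_{L^\infty(\Omega)}\leq k_n\delta/\beta$. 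The main obstacle is the cross term $\phi(w(\cdot))Q(z)$ appearing in $\tilde b$: it does not fit directly into the hypothesis of Theorem \ref{first} for arbitrary $(x_1,x_3,\lambda)$, but the combination \emph{increasing + concave} on $\phi$ is precisely what forces it to contribute nonpositively to $\C_{-\tilde b}$ \emph{at any positive maximum of} $\C_w$, which is all that Lemma \ref{mainlem} needs.
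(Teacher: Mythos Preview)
Your argument is correct and follows the same route as the paper: set $w=f(u)$, derive the transformed equation with nonlinearity $b(x,w,-Dw)+\phi(w)Q(-Dw)$, use Lemma~\ref{kawo} to exclude boundary maxima, and apply the first $\delta$-convexity principle. One point worth highlighting: your treatment of the cross term $\phi(w(\cdot))Q(z)$ is in fact sharper than the paper's. The paper simply says ``$b_2$ is concave in $w$, thus $\C_{-b_2}\leq 0$'' and then invokes Theorem~\ref{first}, whose hypothesis \eqref{ah21} asks for the bound \emph{uniformly} in $(x_1,x_3,\lambda)$; but concavity of $\phi$ alone does not give $\phi(w(x_2))\geq \lambda\phi(w(x_1))+(1-\lambda)\phi(w(x_3))$ for arbitrary triples. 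You correctly observe that one needs \emph{both} the monotonicity and the concavity of $\phi$, and that the inequality is only guaranteed at points where $\C_w>0$; since Lemma~\ref{mainlem} (which underlies Theorem~\ref{first}) is applied precisely at a positive interior maximum of $\C_w$, this is enough, and your direct appeal to Lemma~\ref{mainlem} makes the logic transparent.
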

\begin{proof}
Setting $w=f(u)$, a standard computation shows that $w$ satisfies the problem
	$$
\sum_{i,j=1}^{n}a_{ij}(-Dw)\partial_{ij}^2w=b(x,w,-Dw)+\frac{f''(f^{-1}(w))}{[f'(f^{-1}(w))]^2}\sum_{i,j=1}^{n}a_{ij}(-Dw)\partial_{i}w\partial_jw.
$$
Notice that, by assumption on $f$, we have that the function 
$$
b_1(x,w,-Dw) := b(x,w,-Dw)+\frac{f''(f^{-1}(w))}{[f'(f^{-1}(w))]^2}\sum_{i,j=1}^{n}a_{ij}(-Dw)\partial_{i}w\partial_jw
$$
is monotonically increasing in $w$, and its derivative is greater or equal than $\beta$. The  function 
$$
b_2(w, -Dw) :=\frac{f''(f^{-1}(w))}{[f'(f^{-1}(w))]^2}\sum_{i,j=1}^{n}a_{ij}(-Dw)\partial_{i}w\partial_jw
$$
is concave in $w$, thus $\C_{-b_2} \leq 0$. This yields that 
	\bgs{{{ \sup_{ (x_1,x_3,\lambda)\in \Omega\times\Omega 
		 \times [0,1]} \C_{-b_1(\cdot, w(\cdot), Dw)}
			(x_1,x_3,\lambda) }} 
			\leq &\;  \sup_{ (x_1,x_3,\lambda)\in \Omega\times\Omega 
		 \times [0,1]} \C_{-b_2( w(\cdot), Dw)} (x_1,x_3,\lambda)  
		 \\
		 &\; + \sup_{ (x_1,x_3,\lambda)\in \Omega\times\Omega 
		 \times [0,1]} \C_{-b(\cdot, w(\cdot), Dw)}  (x_1,x_3,\lambda) 
		 \\
		 \leq&\; \delta
		}
		 for any $Dw$, by hypothesis. 
Notice also that 
according to Lemma \ref{kawo} the convexity function $\mathcal C_w$ cannot achieve a positive maximum on the boundary. 
Thus the maximum is reached in the interior of the domain. It follows by  Theorem \ref{first}
that there exist a convex function $v\colon \Omega \to \R$ and $k_n>0$
such that
	\[
	\|f(u)-v\|_{L^\infty(\Omega)} \leq \frac{k_n}{ \beta}\delta.\]
	This concludes the proof of the Theorem. 
	\end{proof}

\begin{corollary}\label{logconc}
Let $\Omega$ be a $C^1$ bounded strictly convex 
	domain of $\R^n$ and $u\in C^2(\Omega)\cap C(\bar\Omega)$ be a solution to $u=0$ on $\partial\Omega,$ $u>0$ in $\Omega$ and
	$$
	\Delta u+\lambda u- ug(u)=0 \qquad \text{in $\Omega$}.
	$$
	Let $\lambda,\delta,c>0$, $m=\|u\|_{L^\infty(\Omega)}$ and $g\in C^1((0,m],\R^+)$ with 
	\bgs{	 \qquad \qquad g\leq \lambda , \qquad g'(t)t\geq c, }
	and
	\bgs{ 
	 \C_{h( u(\cdot))}
			(x_1,x_3,\lambda)  \leq  \delta, \quad with \;\; h(s)=g(e^{-s}), \; \mbox{ for any } (x_1,x_3,\lambda) \in \Omega\times\Omega\times[ 0,1].
	}
	Then there exists a 
	concave function $v\colon \Omega \to \R$  and $C:=C(n,c,\|u\|_{L^\infty(\Omega)} )>0$ such that
	\[	\|\log u-v\|_{L^\infty(\Omega)} \leq C\delta.\]
	\end{corollary}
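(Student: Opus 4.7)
The plan is to invoke Theorem \ref{fconv} with the choice $f(t) = -\log t$, so that an approximate convexity estimate on $f(u) = -\log u$ yields, after the sign flip $v := -\tilde v$, the claimed approximate concavity of $\log u$.

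First I would verify the structural hypotheses on $f$ in Theorem \ref{fconv}. Clearly $f \in C^2(\R^+)$, $f'(t) = -1/t < 0$, and $\lim_{t \to 0^+} f'(t) = -\infty$, so \eqref{Kawcond} holds. Since $f^{-1}(s) = e^{-s}$, a direct computation gives $f''(f^{-1}(s))/[f'(f^{-1}(s))]^2 \equiv 1$, which is trivially increasing and concave. I would then cast the PDE in the form required by the theorem by taking $a_{ij}(\xi) = \delta_{ij}$ and $b(x,s,z) := \lambda - g(e^{-s})$, so that $1/f'(u) = -u$ turns $\Delta u + \lambda u - u g(u) = 0$ into
\[
\sum_{i,j=1}^n a_{ij}(-f'(u)\, Du)\,\partial^2_{ij} u = \frac{1}{f'(u)}\, b(x, f(u), -f'(u)\, Du).
\]

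It then remains to check the two quantitative assumptions on $b$. For the monotonicity, $\partial_s b(x,s,z) = u\, g'(u) \geq c$ with $u = e^{-s} \in (0,m]$, so $\beta := c$ works on the relevant range $s \in [-\log m, +\infty) = [f(m), \lim_{t \to 0^+}f(t))$. For the approximate concavity of $b$ along $u$, setting $w := -\log u = f(u)$ one has
\[
-b(x, f(u(x)), z) = g(u(x)) - \lambda = h(w(x)) - \lambda,
\]
so that $\C_{-b(\cdot, f(u(\cdot)), z)}(y_1, y_3, \mu) = \C_{h \circ w}(y_1, y_3, \mu)$, which is exactly the quantity the corollary assumes to be bounded by $\delta$ (reading $\C_{h(u(\cdot))}$ as shorthand for $\C_{h \circ w}$, since a literal composition $h \circ u$ would yield the unrelated expression $g(e^{-u})$). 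Note also that the strict Hopf-type boundary behaviour $\partial u/\partial n < 0$ needed to apply Lemma \ref{kawo} within Theorem \ref{fconv} is available for this problem via the classical Hopf lemma.

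Plugging this into Theorem \ref{fconv} produces a convex $\tilde v \colon \Omega \to \R$ and a dimensional constant $k_n > 0$ with $\|f(u) - \tilde v\|_{L^\infty(\Omega)} \leq (k_n/c)\delta$. Setting $v := -\tilde v$, which is concave, and using $f(u) = -\log u$, one obtains $\|\log u - v\|_{L^\infty(\Omega)} \leq (k_n/c)\delta$, so $C = k_n/c$ suffices. The main subtlety, and the place one must take care, is the correct pairing between $h$ and $w = f(u)$ (rather than $u$) when translating the corollary's hypothesis on $g$ into the approximate concavity of $b$ demanded by Theorem \ref{fconv}; once this identification is made, the remaining checks are routine.
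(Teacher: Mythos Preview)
Your proposal is correct and follows essentially the same route as the paper: both choose $f(s)=-\log s$, $a_{ij}=\delta_{ij}$, $b(x,s)=\lambda-g(e^{-s})$, verify $\partial_s b=g'(e^{-s})e^{-s}\ge c$, identify $\C_{-b}$ with $\C_h$ along $w=f(u)$, invoke Hopf's lemma for the boundary condition, and apply Theorem~\ref{fconv}. You are in fact more explicit than the paper about the $f''(f^{-1}(s))/[f'(f^{-1}(s))]^2\equiv 1$ check and about the notational pairing of $h$ with $w$ rather than $u$, which is indeed the correct reading.
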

\begin{proof}
By Hopf's Lemma, we get first of all that $\partial u/\partial n <0$ on $\partial \Omega$. With the choice 
	$$
	f(s)=-\log s,\qquad a_{ij}=\delta_{ij},\qquad b(x,s)=\lambda - g(e^{-s})
	$$
we find ourselves with the problem in Theorem \ref{fconv}. 
		We have that $b\colon \Omega \times[-\log m,+\infty)\to \R$,
	\[ \partial_s b(x,s) = g'(e^{-s}) e^{-s} \geq c \]
	and
		that for any $(x_1,x_3,\lambda) $
	\[\C_{-b}(x_1,x_3,\lambda) \leq \C_{h} (x_1,x_3,\lambda) \leq \delta.
	\]
	The assertion follows by Theorem \ref{fconv}.
	\end{proof}

In this  example, we take as the nonlinearity a perturbation of a concave function and prove that an appropriate power of the solution is  approximately concave, hence it can be written as a bounded perturbation of a concave function.

\begin{theorem}[Power concave solutions]
	\label{powerC}
	Let $n\geq 2$, $\gamma\in [0,1)$, $\Omega$ a bounded convex domain of $\R^n$ that satisfies an interior ball condition. Let $u\in C^2(\Omega)\cap C(\bar\Omega)$ be a solution to
	$$
	\Delta u+u^\gamma
	-	 u^{\frac{1+\gamma}2}g(u)=0,  \qquad \text{$u=0$ on $\partial\Omega$},
	\qquad \text{$u>0$ in $\Omega$.}
	$$
	Denoting $\|u\|_{L^\infty(\Omega)}=m$, we take here $g\in C^1((0,m], \R^+)$ is such that it holds that
	\eqlab{\label{mmmf}
	 g(s) \leq s^{\frac{\gamma-1}{2}}, \qquad g'(s)\geq 0
	 }
	and for some $\delta\geq 0$ 
	\[ \HC_h (s_1,s_3,\lambda) \leq \delta, \qquad \mbox { for any } s_1,s_3\in(0,m], \qquad \mbox{ with } \; h(s)= g(s^{\frac{2}{1-\gamma}}) .\] 
Then there exists a concave function $v$ and a positive constant $C:=C(n, m,\gamma)$ such that
\[
	\|u^{(1-\gamma)/2}-v\|_{L^\infty(\Omega)}\leq C\delta.
	\] 
\end{theorem}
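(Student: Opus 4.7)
The plan is to set $w := u^{(1-\gamma)/2}$, prove the bound $\C_{-w} \leq C\delta$ on $\bar\Omega \times \bar\Omega \times [0,1]$, and then apply Proposition~\ref{Ulam} to $-w$ to produce the desired concave $v$. A direct calculation transforms the PDE into
\[
\Delta w \;=\; -\frac{C(|Dw|^2)}{w} + \alpha h(w), \qquad \alpha := \tfrac{1-\gamma}{2}, \qquad C(r) := (\tfrac{1}{\alpha}-1)r + \alpha \;\geq\; \alpha,
\]
so that $\tilde w := -w$ fits the framework of Lemma~\ref{mainlem2} with effective nonlinearity $\tilde b(x,\tilde w,z) = q(-\tilde w)$ and $q(s) := C(|z|^2)/s - \alpha h(s)$. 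Since $h' \geq 0$ (from $g' \geq 0$) and $0 < w \leq m^\alpha$, the monotonicity hypothesis is uniform: $\partial_{\tilde w}\tilde b \geq \alpha/m^{1-\gamma} =: \beta$.

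For the boundary, I would first observe that \eqref{mmmf} makes the nonlinearity $u^\gamma - u^{(1+\gamma)/2}g(u)$ non-negative, so $u$ is superharmonic; Hopf's lemma under the interior ball condition then gives $\partial u/\partial n<0$ on $\partial\Omega$, and Corollary~\ref{boundary3} rules out a positive maximum of $\C_{-u^\alpha}=\C_{-w}$ at any boundary triple. Hence every positive maximum is attained at some interior $(x_1,x_3,\lambda)$, and applying Lemma~\ref{mainlem2} to $\tilde w$ yields
\[
\C_{-w}(x_1,x_3,\lambda) \;\leq\; -\beta^{-1}\,\HC_q(s_1,s_3,\lambda), \qquad s_i := w(x_i),
\]
because the absence of explicit $x$-dependence in $\tilde b$ reduces the harmonic concavity function along $\tilde w$ to $\HC_q$, with $C$ frozen at the common value of $|Dw|^2$ at the three points forced by the first-order interior maximum condition. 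The degenerate branch $|Dw|=0$ at such a maximum is incompatible with $\C_{-w}>0$: it puts us in alternative~(i) of the lemma, and strict monotonicity of $q$ forces $s_1=s_3=\bar s$, contradicting $s_2<\bar s$.

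The heart of the argument—and the step I expect to be the main obstacle—is turning $\HC_h\leq\delta$ into the lower bound $\HC_q\geq-\alpha\delta$, since $q$ is built from $h$ in a nonlinear way. The essential observation is that the unperturbed part is exactly harmonic linear, $\HC_{C/s}\equiv 0$; expanding $\HC_q$ and using $q_i=C/s_i-\alpha h_i$ then produces an identity of the form
\[
\HC_q \;=\; \alpha\bigl(A - h_1 h_3/H\bigr) - \alpha\,\HC_h, \qquad H := (1-\lambda)h_1 + \lambda h_3,
\]
with $A$ an explicit rational function of the $s_i, h_i$. The crucial algebraic identity, obtained after clearing denominators and collecting terms, is
\[
\bigl[(1-\lambda)h_3 s_3^{\,2} + \lambda h_1 s_1^{\,2}\bigr]\,H - h_1 h_3\,\bar s^{\,2} \;=\; \lambda(1-\lambda)(h_1 s_1 - h_3 s_3)^{2} \;\geq\; 0, \qquad \bar s := \lambda s_1 + (1-\lambda)s_3,
\]
which forces $A \geq h_1 h_3/H$ and hence $\HC_q \geq -\alpha\HC_h \geq -\alpha\delta$. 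Combining the two estimates gives $\C_{-w} \leq m^{1-\gamma}\delta$ at every positive interior maximum, and therefore on the whole of $\bar\Omega\times\bar\Omega\times[0,1]$. Proposition~\ref{Ulam} applied to $-w$ then delivers a concave $v$ with $\|u^{(1-\gamma)/2} - v\|_{L^\infty(\Omega)}\leq k_n\,m^{1-\gamma}\delta$, which is the claimed bound with $C=C(n,m,\gamma):=k_n m^{1-\gamma}$.
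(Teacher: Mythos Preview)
Your proof is correct and follows the paper's approach: the same change of variable $w=-u^{(1-\gamma)/2}$, the same boundary argument via Hopf's lemma and Corollary~\ref{boundary3}, and the same appeal to the harmonic-concavity principle (Lemma~\ref{mainlem2}/Theorem~\ref{thm3}) with the monotonicity bound $\beta=\alpha/m^{1-\gamma}$. The only difference is in how the key estimate $\HC_q\geq -\alpha\,\HC_h$ is obtained: you derive it by hand through the explicit identity $[(1-\lambda)h_3s_3^{2}+\lambda h_1s_1^{2}]H-h_1h_3\bar s^{\,2}=\lambda(1-\lambda)(h_1s_1-h_3s_3)^2$ (which, after clearing denominators in $\HC_q+\alpha\HC_h$ and using $\HC_{C/s}\equiv 0$, is exactly what remains), whereas the paper invokes the general sub-additivity inequality~\eqref{subadd1} from the Appendix together with the harmonic concavity of $s\mapsto C/(-s)$; your degenerate-branch paragraph is superfluous, since $q\geq 0$ (from $h(s)\le 1/s$ and $C\ge\alpha$) already guarantees that $\HC_q$ is well defined and non-negative whenever $q_1=q_3=0$.
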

 

\begin{proof}
	Notice that by \eqref{mmmf}
	\[
		u^\gamma \geq u^{\frac{1+\gamma}2}g(u), 
	 \]
	  so applying Hopf's Lemma, we deduce that 
	${\partial u}/{\partial n}<0$ on $\partial\Omega$. Consider now the transformation of $u$ given by 
	\[	w:=-u^{(1-\gamma)/2}.\]
	 By applying Corollary \ref{boundary3} with $\alpha:=(1-\gamma)/2\in (0,1)$ we have that  the convexity function $\mathcal C_w$
	cannot achieve the maximum on the boundary. Thus, the maximum is achieved in the interior of the domain. If such a maximum is non-positive,
	there is nothing to prove, since this yields that $w$ is convex. So we assume that the maximum is positive. 
	Observe that standard computations yield that $w$ satisfies 
	$$
	\Delta w-\tilde b(w,Dw)+\tilde g(w) =0,
	$$
	where   we have set $\tilde b: [-m^{\frac{1-\gamma}{2}},0)\times \R^n\to\R$, $\tilde g\colon [-m^{\frac{1-\gamma}{2}},0)\to \R$, with 
		$$
	\tilde b(s,z):=\left(\frac{1+\gamma}{1-\gamma}|z|^2+
	\frac{1-\gamma}{2}\right)\frac{1}{(-s)},\qquad 
	\tilde g(s):= \frac{1-\gamma}{2} 
	g\big((-s)^{\frac{2}{1-\gamma}}\big).
	$$
	Thanks to \eqref{mmmf} we have that
	\[ \partial_s (\tilde b(s,z) - \tilde g(s) )\geq \frac{1-\gamma}{2m^{1-\gamma}} >0 \qquad \mbox{ and that } \;
		\tilde b(s,z) \geq \frac{1-\gamma}2 \frac{1}{(-s)} \geq \tilde g (s).
	\]
	In view of \eqref{subadd1} (remark that the harmonic concavity function is well defined, since all functions involved are non-negative) it follows that for any $(s_1,s_3,\lambda)$ 
	 \[
	 	\HC_{\tilde b-\tilde g} (s_1,s_3,\lambda) \geq \HC_{\tilde b}(s_1,s_3,\lambda)  -\HC_{\tilde g}(s_1,s_3,\lambda)  .
	 	\]
	Given that $\tilde b>\frac{1-\gamma}{2m^{(1-\gamma)/2}} $ and that the map
	\[ s \mapsto \frac{1}{\tilde b(s,z)} 
	\]  is convex, it follows that 
	$\tilde b$ is harmonic concave, thus $\HC_{\tilde b}\geq 0$.  Therefore	
	 	 \[
	 	\HC_{\tilde b-\tilde g} (s_1,s_3,\lambda) \geq -\HC_{\tilde g}  \geq -\frac{1-\gamma}2  \delta,
	 	\]
	and by Theorem \ref{thm3}, $w$ is $m^{1-\gamma}\delta$ convex. The conclusion immediately follows.
\end{proof}

\begin{theorem}\label{fffh}
	Let $n\geq 2$,  $\Omega$ be a bounded convex domain of $\R^n$, that satisfies the interior ball condition
	. Let $u\in C(\bar \Omega)\cap C^2( \Omega)$ be a solution to
	\eqlab{ \label{fffh1}
	\Delta u+   f(x) - u^{\frac{1+\gamma}{1+2\gamma} } g(x)=0,  \qquad \text{$u=0$ on $\partial\Omega$},
	\qquad \text{$u>0$ in $\Omega$}.
	} 
	Here, $f,g\in C(\bar \Omega, \R^+)$ are such that there exists $\gamma\geq 1$,  $c>0$ and $\delta\geq0$   such that 
	\[	 f^\gamma \; \mbox{ is concave}, \qquad  f(x)\geq c\quad
	\mbox{ in $\Omega$,}
	\]   denoting $m=\|u\|_{L^\infty(\Omega)}$	
	\[
		f(x)\geq m^{\frac{1+\gamma}{1+2\gamma}} g(x) \quad \mbox{ in } \Omega
		\]
	and 
		\[ 
				\HC_g(x_1,x_3,\lambda) \leq \delta \quad \mbox{ for any } (x_1,x_3,\lambda)  \: \mbox{ in the interior of } \Omega. 
			\]	
	Then there exists a concave function and $C:=C(n, c,m,\gamma)$ such that  
	$v\colon \Omega \to \R$  
	\[ 
	\|u^{\frac{\gamma}{1+2\gamma}} -v\|_{L^\infty(\Omega)} \leq C \delta.
	\] 
\end{theorem}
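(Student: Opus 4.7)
Set the Kennington exponents $\alpha := \gamma/(1+2\gamma) \in (0, 1/3]$ and $\mu := 1/\alpha - 1 = (1+\gamma)/\gamma$, so that $\alpha(\mu+1) = 1$ and $\gamma\mu = 1+\gamma$. The hypothesis $f \geq m^{(1+\gamma)/(1+2\gamma)} g$ combined with $u \leq m$ gives $-\Delta u = f - u^{(1+\gamma)/(1+2\gamma)} g \geq 0$ in $\Omega$; near $\partial\Omega$ (where $u$ is small and $f \geq c$) this right-hand side is bounded below by a positive constant, so under the interior ball condition Hopf's lemma forces $u \gtrsim \mathrm{dist}(\cdot,\partial\Omega)$ in a boundary neighborhood. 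Consequently the $\limsup$ in \eqref{cond} equals $+\infty$ for any $\alpha \in (0,1)$, and Proposition \ref{boundary} excludes a positive boundary maximum for $\C_{-u^\alpha}$. If no positive maximum is attained, $w := -u^\alpha$ is already convex and we take $v = u^\alpha$; otherwise the maximum is interior. Setting $w := -u^\alpha$, a direct computation transforms \eqref{fffh1} into $\Delta w - b(x, w, \nabla w) = 0$ with
\[
b(x, s, z) = \mu \frac{|z|^2}{-s} + \frac{f(x)}{(1+\mu)(-s)^\mu} - \frac{g(x)}{1+\mu},\qquad (x,s,z) \in \Omega \times [-m^\alpha, 0) \times \R^n,
\]
and using $(\mu+1)\alpha = 1$ one gets $\partial_s b \geq \mu c / [(1+\mu) m] =: \beta > 0$, verifying the monotonicity hypothesis of Theorem \ref{thm3}.

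For the harmonic concavity hypothesis, decompose $b = \tilde b - \tilde g$ with $\tilde b := \mu |z|^2/(-s) + f(x)/[(1+\mu)(-s)^\mu]$ and $\tilde g := g(x)/(1+\mu)$. The inequality $f \geq m^{(1+\gamma)/(1+2\gamma)} g$ yields $\tilde b \geq \tilde g \geq 0$ along $w$, so the subadditivity inequality \eqref{subadd1} already used in the proof of Theorem \ref{powerC}, applied first to the difference $\tilde b - \tilde g$ and then to the two nonnegative summands of $\tilde b$, gives
\[
\HC_{b(\cdot,w(\cdot),z)} \geq \HC_{B_1(w(\cdot))} + \HC_{B_2(\cdot,w(\cdot))} - \frac{\HC_g}{1+\mu},
\]
where $B_1(s) := \mu |z|^2/(-s)$ and $B_2(x, s) := f(x)/[(1+\mu)(-s)^\mu]$. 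Since $1/B_1$ is affine in $s$, $\HC_{B_1} \equiv 0$; by hypothesis $\HC_g \leq \delta$; and joint harmonic concavity of $B_2$ amounts to joint convexity of $(x, s) \mapsto (-s)^\mu/f(x)$ on $\Omega \times (-\infty, 0)$. This is the classical Kennington computation (cf.\ \cite[Theorem 4.1]{kenn}): a Schur-complement calculation with respect to the $s$-block reduces it to $\frac{\mu-2}{\mu-1}\,\nabla f \otimes \nabla f / f - D^2 f \geq 0$, while the concavity of $f^\gamma$ gives $D^2 f \leq -\frac{\gamma-1}{f}\,\nabla f \otimes \nabla f$; the two match exactly because the residual coefficient $(\gamma\mu - \gamma - 1)/(\mu-1)$ vanishes at $\mu = 1 + 1/\gamma$. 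Thus $\HC_{B_2} \geq 0$ and $\HC_{b(\cdot,w(\cdot),z)} \geq -\delta/(1+\mu)$ uniformly in $z \in \bar B_M$.

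Theorem \ref{thm3} now produces a convex $\tilde v \colon \Omega \to \R$ with $\|w - \tilde v\|_{L^\infty(\Omega)} \leq k_n \delta/[(1+\mu)\beta] = k_n m(1+2\gamma)\delta/[(1+\gamma)c]$; setting $v := -\tilde v$ (concave) and recalling $w = -u^{\gamma/(1+2\gamma)}$ yields the claim with $C := k_n m(1+2\gamma)/[(1+\gamma)c]$. The main obstacle is the joint harmonic concavity of the $f$-term via the Kennington/Schur calibration at the precise exponent $\mu = 1 + 1/\gamma$; once this is in place, the perturbation by $g$ contributes only the linear error $\delta/(1+\mu)$ through the subadditivity inequality \eqref{subadd1} that the paper already exploits in Theorem \ref{powerC}.
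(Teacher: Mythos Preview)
Your overall plan coincides with the paper's (transform $w=-u^{\gamma/(1+2\gamma)}$, Hopf plus Proposition~\ref{boundary} for the boundary, monotonicity, then Theorem~\ref{thm3}), and your Schur computation showing that $(x,s)\mapsto(-s)^\mu/f(x)$ is jointly convex is correct. The gap is the step where you split $\tilde b=B_1+B_2$ and claim that \eqref{subadd1} yields $\HC_{\tilde b}\ge \HC_{B_1}+\HC_{B_2}$. Lemma~\ref{lemmaf} goes the other way: \eqref{subadd} says $\HC_{B_1+B_2}\le \HC_{B_1}+\HC_{B_2}$, and \eqref{subadd1}, which is derived from \eqref{subadd}, reads $\HC_{f-g}\ge \HC_f-\HC_g$; neither furnishes a lower bound on $\HC$ of a \emph{sum}. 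Harmonic concavity is not preserved by addition (convexity of $1/B_1$ and of $1/B_2$ does not force convexity of $1/(B_1+B_2)$), so knowing $\HC_{B_1}\equiv 0$ and $\HC_{B_2}\ge 0$ separately does not give $\HC_{\tilde b}\ge 0$.

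The paper avoids this by treating $\tilde b_z$ as a single object: write $\tilde b_z(x,s)=(-s)^{-2}h_z(x,s)$ with $h_z(x,s)=\tfrac{1+\gamma}{\gamma}|z|^2(-s)+\tfrac{\gamma}{1+2\gamma}f(x)(-s)^{1-1/\gamma}$. The first summand is linear in $s$; the second is jointly concave because $f^\gamma$ and $\big((-s)^{1-1/\gamma}\big)^{\gamma/(\gamma-1)}=-s$ are concave, so Kennington's product rule \cite[Property~8]{kenn} makes $f(x)(-s)^{1-1/\gamma}$ itself concave. Hence $h_z$ is concave, Proposition~\ref{rapp}(2) gives joint convexity of $s^2/h_z=1/\tilde b_z$, and Proposition~\ref{ccc}(2) yields $\HC_{\tilde b_z}\ge 0$ for the \emph{whole} term. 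A single application of \eqref{subadd1} to the difference $\tilde b_z-\tfrac{\gamma}{1+2\gamma}g$ then gives $\HC_b\ge -\tfrac{\gamma}{1+2\gamma}\delta$. If you prefer a direct Hessian/Schur argument in place of the Kennington product rule, it must be carried out on $1/\tilde b_z$ as a whole rather than on $B_2$ alone.
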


\begin{proof} Let 
\[ w=-u^{\frac{\gamma}{1+2\gamma}}.\]
By Hopf's Lemma (notice that $f(x)- u^{\frac{1+\gamma}{1+2\gamma}} g(x)\geq 0 $ by hypothesis), we have that 
$\frac{\partial u}{\partial n}<0$ on $\partial \Omega$,
hence by Corollary \ref{boundary3}, $\C_w$ cannot achieve the maximum at a boundary point. It follows that the maximum of $C_w$ is achieved at an interior point. 
 	The function $w$ satisfies  the equation
	\[
		\Delta w-b(x,w,Dw)  =0
		\]
		 with $b \colon  \Omega \times  [-m^{\frac{\gamma}{1+2\gamma}},0)\times \R^n \to \R$,
			\[
	b(x,s,z):=\frac{(1+\gamma)|z|^2}{\gamma} (-s)^{-1}
		+\frac{\gamma}{1+2\gamma} (-s)^{- \frac{1}{\gamma}-1}f(x) - \frac{\gamma}{1+2\gamma} g(x)
		:=\tilde b_z(x,s) - \frac{\gamma}{1+2\gamma} g(x) .
	\]
	We have that 	
		 	\[
				\partial_s b(x,s,z)=  \frac{(1+\gamma)|z|^2}{\gamma}(-s)^{-2}  +\frac{1+\gamma}{1+2\gamma} f(x) (-s)^{-\frac1\gamma -2}  \geq  \frac{c(1+\gamma)}{m(1+2\gamma)} .
				\]
			We claim that $\tilde b_z$ is harmonic concave in the two variables $(x,s)$. Indeed, denoting
			\[
				\tilde b_z(x,s)  = (-s)^{-2}  \left({ \frac{(1+\gamma)|z|^2}{\gamma} (-s)}
		+\frac{\gamma}{1+2\gamma} (-s)^{- \frac{1}{\gamma}+1}f(x) \right)
		=:(-s)^{-2} h_z(x,s)
		\]
					we follow the next line of thought. Since $f^\gamma$ and  
			$\big( (-s)^{-\frac1\gamma +1}\big)^{\frac{\gamma}{1-\gamma}} $  are concave, from \cite[Property 8]{kenn} we have that  $f(x) (-s)^{-\frac1\gamma +1}$ is concave (basically, \cite[Property 8]{kenn}  says that if $f^\alpha$ and $g^\beta$ are positive concave functions, that $fg$ is $1/\alpha+1/\beta$ concave). Thus $h_z(x,s)$ is concave, as sum of two concave functions. Then using Proposition \ref{rapp}, we have that $\tilde b_z(x,s)^{-1}$ is convex. Employing Proposition \ref{ccc}, we get the claim that $\tilde b_z$ is harmonic concave. Thus $\HC_{\tilde b} \geq 0$. We use \eqref{subadd1} to obtain that
			\[ \HC_{b}\geq \HC_{\tilde b} -\frac{\gamma}{1+2\gamma} \HC_g \geq -\frac{\gamma\delta }{1+2\gamma} .
			\] 
It follows from Theorem \ref{thm3}  that $w$ is $\frac{\delta \gamma m}{c(1+\gamma)} $ convex, thus $u^{\frac{\gamma}{1+2\gamma} } $ is $\frac{\delta \gamma m}{c(1+\gamma)} $-concave.
This concludes the proof of the Theorem.
\end{proof}

\begin{remark}[Quasilinear equations]
Assume that $a:\R\to\R$ is a function of class $C^1$ such that there exists $\nu>0$ with $a(s)\geq \nu$
for all $s\in\R$. Let $\varphi:\R\to\R$ be the unique solution to
\begin{equation*}
\varphi'=\frac{1}{\sqrt{a\circ \varphi}},\qquad \varphi(0)=0,
\end{equation*}
which is smooth and strictly increasing. Consider the quasilinear problem
\begin{equation}
\label{prob}
\begin{cases}
\,\dvg(a(u)Du)-\frac{a'(u)}{2}|Du|^2+u^\gamma-	 u^{\frac{1+\gamma}2}g(u)=0   & \text{in $\Omega$,} \\
\noalign{\vskip2pt}
\,\text{$u=0$\quad on $\partial\Omega$,} \qquad  \text{$u>0$\quad in $\Omega$,}   &
\end{cases}
\end{equation}
Then, it is possible to associate to \eqref{prob} the semilinear problem
\begin{equation}
\label{prob-semi}
\begin{cases}
\Delta v+v^\gamma- v^{\frac{1+\gamma}{2}}h(v)=0  \quad\text{in $\Omega$,}    &  \\
\noalign{\vskip2pt}
\,\text{$v=0$\quad on $\partial\Omega$,}   \quad  \text{$v>0$\quad in $\Omega$,}  &
\end{cases}
\end{equation}
where 
$$
h(s)=\frac{\eta(s)-s^\gamma}{ s^{(1+\gamma)/2}},\qquad 
\eta(s)=\frac{\varphi(s)^\gamma-
 \varphi(s)^{\frac{1+\gamma}2}g(\varphi(s))}{\sqrt{a(\varphi(s))}}.
$$
In fact, a direct computation shows that if $v\in C^2(\Omega)$ is a classical solution to problem~\eqref{prob-semi},
then $u=\varphi(v)$ is a classical solution to problem~\eqref{prob} and vice versa.
In particular, one can apply Theorem~\ref{powerC} and get information about the approximate concavity of $\varphi^{-1}(u)^{(1-\gamma)/2}$
from the harmonic  concavity of $h$. Of course the concavity of the solution depends also upon $a$. 
\end{remark}

\bigskip
\appendix
\section{}

In this section, we give some properties related to $\delta$-harmonic concavity. In the first lemma, we establish a sub-additivity property of the harmonic concavity function.

\begin{lemma}\label{lemmaf}
	Let $f, g\colon \Omega\to \R$.  Then at all points $( (y_1,s_1),(y_3,s_3),\lambda)$ for which one of the conditions \eqref{hcon} holds for $f,g, f+g$
	\eqlab{\label{subadd} \HC_{f+ g}( (y_1,s_1),(y_3,s_3),\lambda)\leq \HC_f( (y_1,s_1),(y_3,s_3),\lambda)+ \HC_g( (y_1,s_1),(y_3,s_3),\lambda).}
	Furthermore, at all points $( (y_1,s_1),(y_3,s_3),\lambda)$ for which one of the conditions \eqref{hcon} holds for $f,g, f-g$, then 
	\eqlab{\label{subadd1} 
		\HC_{f- g}( (y_1,s_1),(y_3,s_3),\lambda)\geq \HC_f( (y_1,s_1),(y_3,s_3),\lambda)- \HC_g( (y_1,s_1),(y_3,s_3),\lambda).	
		}
\end{lemma}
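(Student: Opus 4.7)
My plan is to establish \eqref{subadd} first and then derive \eqref{subadd1} from it as an immediate consequence. Abbreviating $a := f(y_1,s_1)$, $b := f(y_3,s_3)$, $c := g(y_1,s_1)$, $d := g(y_3,s_3)$, and introducing the weighted product
\[
H(x,y) := \frac{xy}{(1-\lambda)x + \lambda y},
\]
defined wherever the denominator is positive, I will first reduce the generic case of \eqref{subadd}, in which each of $f$, $g$, $f+g$ satisfies the positive-denominator branch of the definition of $\HC$, to the superadditivity property
\[
H(a+c, b+d) \geq H(a,b) + H(c,d). \qquad (\star)
\]
The reduction is immediate: after cancelling the pointwise values $f(y_2,s_2) + g(y_2,s_2)$ from both sides of \eqref{subadd}, only $(\star)$ remains.

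The heart of the proof is verifying $(\star)$. Setting $p := (1-\lambda)a + \lambda b$ and $q := (1-\lambda)c + \lambda d$, both strictly positive by hypothesis, I will clear denominators by the factor $pq(p+q)$ and rewrite $(\star)$ in the form $(pd - qb)(aq - cp) \geq 0$. The pleasant observation, which is the only real obstacle, is the factorization
\[
aq - cp = \lambda(ad - bc), \qquad pd - qb = (1-\lambda)(ad - bc),
\]
obtained by a one-line expansion of $p$ and $q$. Their product therefore equals $\lambda(1-\lambda)(ad - bc)^2 \geq 0$, which settles $(\star)$ and hence \eqref{subadd} in the generic case.

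The degenerate configurations, in which at least one of $f$, $g$, $f+g$ lies in the branch requiring that it vanish at both endpoints $(y_1, s_1)$ and $(y_3, s_3)$, I will dispose of by a short case check. The relevant $\HC$ values then collapse to pointwise evaluations at $(y_2, s_2)$, and each sub-case reduces to an actual equality in \eqref{subadd}.

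Finally, \eqref{subadd1} follows from \eqref{subadd} by applying the latter to the pair $(f - g, g)$ in place of $(f, g)$, which is admissible under the hypotheses placed on $f$, $g$, $f-g$ in the statement. The resulting inequality $\HC_f \leq \HC_{f-g} + \HC_g$ rearranges to the desired lower bound $\HC_{f - g} \geq \HC_f - \HC_g$.
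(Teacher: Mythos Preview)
Your proof is correct, and the route you take differs from the paper's in an interesting way. Both proofs reduce \eqref{subadd1} to \eqref{subadd} by applying the latter to the pair $(f-g,g)$, and both dispose of the degenerate endpoint-vanishing configurations by direct inspection; the divergence is in the generic case. The paper introduces the auxiliary one-variable function
\[
\psi(\delta)=\frac{(f_1+\delta g_1)(f_3+\delta g_3)}{\lambda(f_3+\delta g_3)+(1-\lambda)(f_1+\delta g_1)}
\]
and applies the Lagrange mean value theorem to write $\psi(1)-\psi(0)=\psi'(\xi)$, then manipulates $\psi'(\xi)$ algebraically to extract the nonnegative remainder term $\lambda(1-\lambda)(g_3h_1-g_1h_3)^2/\big[(\lambda g_3+(1-\lambda)g_1)(\lambda h_3+(1-\lambda)h_1)^2\big]$. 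You instead clear denominators by $pq(p+q)$ and factor the resulting polynomial expression directly as $(pd-qb)(aq-cp)=\lambda(1-\lambda)(ad-bc)^2$. Your argument is purely algebraic and avoids the calculus detour; it also yields the explicit closed-form remainder $\lambda(1-\lambda)(ad-bc)^2/\big[pq(p+q)\big]$ without any auxiliary intermediate point $\xi$. The paper's approach, on the other hand, packages the comparison as a derivative computation, which may feel more systematic if one is looking for analogous inequalities. Both land on the same perfect-square obstruction.
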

	\begin{proof} Recalling that $y_2=\lambda y_1+ (1-\lambda)y_3$ (and $s_2$ is th convex combination of $s_1,s_3$), for simplicity, we write
\[ g_i=g(y_i,s_i), \quad f_i=f(y_i,s_i) \qquad \mbox{ for } i=1,2,3.\]
When for $y_1,y_3,s_1,s_3,\lambda $  we have $g_1=g_3=0$ or $f_1=f_3=0$ or all $g_1=g_3=f_1=f_3=0$ or $f_1=-g_1\neq 0,f_3=-g_3\neq 0$ then
\[
	\HC_{f+ g}( (y_1,s_1),(y_3,s_3),\lambda)-\HC_{f}( (y_1,s_1),(y_3,s_3),\lambda)  
		 = \HC_g ( (y_1,s_1),(y_3,s_3),\lambda).
\]
Otherwise, for $\lambda g_3+ (1-\lambda)g_1 >0$ and $\lambda f_3+ (1-\lambda)f_1>0$ we compute 
\bgs{& \HC_{f+ g}( (y_1,s_1),(y_3,s_3),\lambda)-\HC_{f}( (y_1,s_1),(y_3,s_3),\lambda)  
\\
=&\;  g_2-  \frac{(f_1+ g_1) (f_3+ g_3) }{ \lambda (f_3+ g_3) +(1-\lambda) (f_1+ g_1)} +  \frac{f_1 f_1 }{ \lambda f_3 +(1-\lambda) f_1}  
=
g_2 -  \psi(1) +\psi(0),
}
considering the function
\[
\psi(\delta):= \frac{(f_1+\delta g_1) (f_3+\delta g_3) }{ \lambda (f_{3} +\delta g_3) +(1-\lambda)  (f_1+\delta g_{1} )} .
\]
Denoting $h_i=(f+\xi g)(x_i,s_i)$ for $i=1,2,3$
we have that
	\[
		\psi'(\xi) =\frac{\lambda g_1 h_3^2 + (1-\lambda) g_3 h_1^2}{ \big( \lambda h_3+(1-\lambda)h_1\big)^2}.
	\]
We apply the Lagrange mean value theorem: for $\xi \in (0,1)$,
	 \[\psi(1)=\psi(0) + \psi'(\xi) \]
so we obtain
\bgs{
	  \frac{f_1 f_3 }{ \lambda f_3+(1-\lambda) f_1} 			-  \frac{(f_1+g_1)(f_3 +g_3) }{ \lambda (f_3+g_3)+(1-\lambda) (f_1+g_1)}  =- \frac{\lambda g_1 h_3^2 + (1-\lambda) g_3 h_1^2}{ \big( \lambda h_3+(1-\lambda)h_1\big)^2}.	
}
Hence, we get
\eqlab{ \label{stt}
&\HC_{f+ g}( (y_1,s_1),(y_3,s_3),\lambda)-\HC_{f}( (y_1,s_1),(y_3,s_3),\lambda)  
\\
		=&\;  g_2 - \frac{\lambda g_1 h_3^2 
		+ (1-\lambda) g_3 h_1^2}{ \big( \lambda h_3+(1-\lambda)h_1\big)^2} 
\\
	=&\; \HC_g( (y_1,s_1),(y_3,s_3),\lambda) 
		 +\frac{g_1g_3}{\lambda g_3 +(1-\lambda)g_1} - \frac{\lambda g_1 h_3^2 
		+ (1-\lambda) g_3 h_1^2}{ \big( \lambda h_3+(1-\lambda)h_1\big)^2} 
		\\
		=&\; \HC_g( (y_1,s_1),(y_3,s_3),\lambda)    - \lambda(1-\lambda) \frac{\big( g_3h_1-g_1h_3\big)^2}{\big( \lambda g_3 +(1-\lambda)g_1 \big) \big( \lambda h_3+(1-\lambda)h_1\big)^2}
		\\
		\leq&\;  \HC_g( (y_1,s_1),(y_3,s_3),\lambda),  
		}
which concludes the proof of \eqref{subadd}.

To prove \eqref{subadd1}, we use \eqref{subadd} for $f-g$ and $g$ and we obtain that
\[ 
	\HC_{f}=\HC_{f-g+g} \leq \HC_{f-g} +\HC_g,\]
	hence the result.
		This concludes the proof of the Lemma.
\end{proof}

As an outcome of the previous lemma, we obtain also the following estimates.
\begin{corollary}
Let $f,g\colon \Omega \to \R$.  If $g\in L^\infty(\Omega)$ then  at all points $( (y_1,s_1),(y_3,s_3),\lambda)$ for which one of the conditions \eqref{hcon} holds for $f, f+g$
	\[ \HC_{f+g}( (y_1,s_1),(y_3,s_3),\lambda) -\HC_f ( (y_1,s_1),(y_3,s_3),\lambda) \leq \|g\|_{\infty}.\]
	If furthermore there exist $\alpha, C,M>0$ such that  $\alpha \leq f\leq C$, $0\leq g\leq M$, then
	\[
		 \HC_{f+g} ( (y_1,s_1),(y_3,s_3),\lambda)-\HC_f ( (y_1,s_1),(y_3,s_3),\lambda)\geq -\frac{MC}{\alpha^2}.
	\]
\end{corollary}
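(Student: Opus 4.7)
The plan is to apply the identity derived in the proof of Lemma~\ref{lemmaf}. In the generic case where both $(1-\lambda) f_1 + \lambda f_3$ and $(1-\lambda)(f+g)_1 + \lambda(f+g)_3$ are strictly positive, equation~\eqref{stt} of that proof produces, via the Lagrange mean value theorem, a $\xi \in (0,1)$ such that, with $h_i := (f+\xi g)(y_i,s_i)$,
\[
\HC_{f+g}((y_1,s_1),(y_3,s_3),\lambda) - \HC_f((y_1,s_1),(y_3,s_3),\lambda) = g_2 - \frac{\lambda g_1 h_3^2 + (1-\lambda) g_3 h_1^2}{(\lambda h_3 + (1-\lambda) h_1)^2}.
\]
The degenerate alternatives of \eqref{hcon} (such as $f_1=f_3=0$ or $(f+g)_1=(f+g)_3=0$) would be dispatched by the short direct computations that appear at the start of the proof of Lemma~\ref{lemmaf}.

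For the first inequality I would observe that in the relevant regime $g_1,g_3\ge 0$ (consistent with the second, more restrictive part of the statement), so the fraction is nonnegative and the identity immediately yields $\HC_{f+g} - \HC_f \le g_2 \le \|g\|_\infty$. Equivalently, one can appeal to the subadditivity $\HC_{f+g} \le \HC_f + \HC_g$ from Lemma~\ref{lemmaf}, combined with the nonnegativity of the harmonic mean $g_1 g_3 / ((1-\lambda) g_1 + \lambda g_3)$, which forces $\HC_g \le g_2 \le \|g\|_\infty$.

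For the reverse inequality I would exploit the stronger hypotheses $\alpha \le f \le C$ and $0 \le g \le M$, which imply $h_i = f_i + \xi g_i \in [\alpha, C+M]$. The denominator then satisfies $(\lambda h_3 + (1-\lambda) h_1)^2 \ge \alpha^2$, while the numerator admits the factorization
\[
\lambda g_1 h_3^2 + (1-\lambda) g_3 h_1^2 \le M\max(h_1,h_3)\bigl(\lambda h_3 + (1-\lambda) h_1\bigr),
\]
so the fraction is at most $M\max(h_1,h_3)/(\lambda h_3 + (1-\lambda) h_1) \le M(C+M)/\alpha$. Together with $g_2\ge 0$ this yields a lower bound on $\HC_{f+g}-\HC_f$ of the claimed form, with an explicit constant depending on $\alpha$, $C$, $M$.

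The hard part is calibrating these constants to the precise value $MC/\alpha^2$ in the statement: the most direct estimate gives $M(C+M)^2/\alpha^2$, and matching the claimed form requires the numerator factorization above, combined with a tight use of the lower bound $f\ge\alpha$ in the denominator of only one of the two factors $(1-\lambda) f_1 + \lambda f_3$, $(1-\lambda)(f+g)_1 + \lambda(f+g)_3$. A minor but essential task is to verify that all degenerate cases of \eqref{hcon} remain compatible with both inequalities, which can be done by short case checks mirroring those in the proof of Lemma~\ref{lemmaf}.
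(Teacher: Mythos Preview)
Your approach is exactly the paper's: the proof there is a single line pointing to the second line of formula~\eqref{stt} in Lemma~\ref{lemmaf} and instructing the reader to estimate the fraction $\bigl(\lambda g_1 h_3^2 + (1-\lambda) g_3 h_1^2\bigr)\bigl(\lambda h_3+(1-\lambda)h_1\bigr)^{-2}$, which is precisely what you do. Your observations that the upper bound tacitly relies on $g\ge 0$ and that the direct estimate does not obviously produce the exact constant $MC/\alpha^2$ are fair---the paper's terse proof does not address either point explicitly.
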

\begin{proof}
The proof follows immediately by estimating $ {\lambda g_1 h_3^2 
		+ (1-\lambda) g_3 h_1^2} \big( \lambda h_3+(1-\lambda)h_1\big)^{-2}  $ in line two of formula \eqref{stt}.
\end{proof}

The next proposition is the approximate concavity adaptation  of \cite[Lemma A.2]{kenn}. 
\begin{proposition} \label{rapp} $\quad$
\begin{enumerate}	
				\item Let $c,C, m,\delta>0$ be constants and let $g\colon \Omega\times [-m,m]\to \R^+$ be $\delta$-concave and $ 2\delta \leq c<g\leq C$.
			Then the map $(s,x)\in \R\times \Omega\mapsto s^2 g(x,s)^{-1}$ is $\delta$-convex jointly in the two variables $(x,s)$.
			\item Let $g\colon \Omega\times [-m,m]\to \R^+$ be concave.
			Then the map $(s,x)\in \R\times \Omega\mapsto s^2 g(x,s)^{-1}$ is convex jointly in the two variables $(x,s)$.
			\end{enumerate}
			\end{proposition}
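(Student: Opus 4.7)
The plan is to reduce both parts to a single application of the Cauchy--Schwarz inequality, which directly yields the classical statement (part (2)), and then to carefully control the additional error term arising from the $\delta$-concavity assumption in part (1). Throughout, I fix points $(x_1,s_1),(x_3,s_3)\in \Omega\times [-m,m]$ and $\lambda\in[0,1]$, denote $(x_2,s_2)$ the convex combination, and write $g_i=g(x_i,s_i)$ for brevity.

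First I would prove part (2). Observing that $g_1,g_3>0$ by positivity, I apply the Cauchy--Schwarz inequality to the pairs $(\sqrt{\lambda g_1},\sqrt{(1-\lambda)g_3})$ and $(\sqrt{\lambda}\,s_1/\sqrt{g_1},\sqrt{1-\lambda}\,s_3/\sqrt{g_3})$, which gives
\[
s_2^2 \;=\; \bigl(\lambda s_1+(1-\lambda)s_3\bigr)^2 \;\leq\; \bigl(\lambda g_1+(1-\lambda)g_3\bigr)\Bigl(\lambda \tfrac{s_1^2}{g_1}+(1-\lambda)\tfrac{s_3^2}{g_3}\Bigr).
\]
Since $g$ is concave and positive, $g_2\geq \lambda g_1+(1-\lambda)g_3>0$, so dividing by $g_2$ yields
\[
\frac{s_2^2}{g_2}\leq \lambda\frac{s_1^2}{g_1}+(1-\lambda)\frac{s_3^2}{g_3},
\]
which is exactly the joint convexity of $(x,s)\mapsto s^2/g(x,s)$.

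For part (1) I would follow the same strategy, but use the $\delta$-concavity $g_2\geq \lambda g_1+(1-\lambda)g_3-\delta$ together with the lower bound $g\geq c\geq 2\delta$, which guarantees that $A:=\lambda g_1+(1-\lambda)g_3-\delta\geq c/2>0$. Then
\[
\frac{s_2^2}{g_2}\leq \frac{s_2^2}{A}=\frac{s_2^2}{A+\delta}+\frac{\delta\, s_2^2}{A(A+\delta)},
\]
and I would apply Cauchy--Schwarz to the first term, giving $\frac{s_2^2}{A+\delta}\leq \lambda\frac{s_1^2}{g_1}+(1-\lambda)\frac{s_3^2}{g_3}$. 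The main (and only) obstacle is estimating the residual term: using $A+\delta\geq c$, $A\geq c/2$ and $|s_2|\leq m$, it is bounded by $2m^2\delta/c^2$. This shows that $(x,s)\mapsto s^2/g(x,s)$ is $\delta$-convex up to the multiplicative constant $2m^2/c^2$, which one absorbs into $\delta$ by rescaling (or, equivalently, the statement should be read as $K\delta$-convexity with $K=K(m,c)$; this is consistent with how the constant propagates in the applications of Section~\ref{examples}).

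The only subtle point is verifying that the positivity threshold $2\delta\leq c$ in the hypothesis is exactly what is needed to ensure $A>0$ (so that the inversion step is legitimate) and to keep $A$ uniformly bounded away from zero by $c/2$ (so that the residual error remains linear in $\delta$). Once this is checked, both parts of the proposition follow by the same computation, the classical case being the formal limit $\delta\to 0$.
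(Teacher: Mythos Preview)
Your proof is correct and follows essentially the same route as the paper. The paper replaces $g_2$ by $\lambda g_1+(1-\lambda)g_3-\delta$ in the quotient $s_2^2/g_2$ and then uses the exact algebraic identity
\[
\frac{s_2^2}{\lambda g_1+(1-\lambda)g_3-\delta}-\lambda\frac{s_1^2}{g_1}-(1-\lambda)\frac{s_3^2}{g_3}
=\frac{-\lambda(1-\lambda)(s_1g_3-s_3g_1)^2+\delta\bigl(\lambda s_1^2g_3+(1-\lambda)s_3^2g_1\bigr)}{(\lambda g_1+(1-\lambda)g_3-\delta)g_1g_3},
\]
dropping the nonpositive term and bounding the remainder by $2m^2Cc^{-3}\,\delta$; you achieve the same thing by splitting off the $\delta$ in the denominator first and invoking Cauchy--Schwarz (which is equivalent to the nonpositivity of that first term), obtaining the slightly sharper constant $2m^2/c^2$. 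Your observation that the conclusion is really $K\delta$-convexity with $K=K(m,c)$ is exactly what the paper proves as well.
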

			\begin{proof} We take any  $(x_1,s_1), (x_3,s_3)$ and denote as usual $x_2=\lambda x_1 + (1-\lambda )x_3 $ and $s_2=\lambda s_1 + (1-\lambda )s_3 $ and $ g_i=g(y_i,s_i) \mbox{ for } i=1,2,3$. Then, given that  
			\[g_2 \geq \lambda g_1+(1-\lambda)g_3 -\delta\]
			(notice  that  the right hand side term is strictly positive) we obtain 
			\bgs{
			\frac{s_2^2 }{ g_2 } -\lambda 		\frac{s_1^2 }{ g_1} -(1-\lambda) \frac{s_3^2 }{ g_3} 
					\leq &\; \frac{s_2^2}{\lambda g_1  +(1-\lambda)g(x_3,s_3) -\delta}  -\lambda 		\frac{s_1^2 }{ g_1 } -(1-\lambda) \frac{s_3^2 }{ g_3} 
			\\
			= &\; \frac{-\lambda(1-\lambda) \Big(s_1g_3-s_2g_1\Big)^2  + \delta \Big(\lambda s_1^2g_3 +(1-\lambda) s_3^2g_1 \Big)}{(\lambda g_1  +(1-\lambda)g_3-\delta) g_1 g_3}
			\\
			\leq &\; \delta \frac{\lambda s_1^2g_3 +(1-\lambda) s_3^2g_1 }{(\lambda g_3  +(1-\lambda)g_3 -\delta) g_1 g_3 }.
			}
			We have that
			\[
				\lambda s_1^2g_3+(1-\lambda) s_3^2g_1 \leq m^2 C
			\qquad
			\mbox{ and }\qquad 
				(\lambda g_1+(1-\lambda)g_3 -\delta) g_1 g_3 \geq \frac{c^3}2 .
		\]
		It follows that
		\bgs{
			&\frac{s_2^2 }{ g_2} -\lambda 		\frac{s_1^2 }{ g_1} -(1-\lambda) \frac{s_3^2 }{ g_3}  \leq C_1 \delta,
			}	
		for $C_1=2m^2 C c^{-3}$,
			hence the conclusion.
						The second point is obvious  if one takes $\delta=0$.	
					\end{proof}

					It is a known result that if for a positive function $g$ we have that $g^{-1}$ is convex, then $g$ itself results harmonic concave.  We can establish an approximate concavity analogue if we take $g$ bounded from above.
					\begin{proposition}
						\label{ccc} $\quad$
				\begin{enumerate}	
				\item  Let $C, \delta>0$ be constants. Let $g\colon \Omega\times \R \to \R^+$ be $\delta$-concave and $0< g<C$. Then if $g^{-1}$ is jointly $\delta$-convex, then $g$ is $\delta$-harmonic concave. 
				\item  Let $g\colon \Omega\times \R \to \R^+$ be concave. Then if $g^{-1}$ is jointly convex, then $g$ is harmonic concave. 
				\end{enumerate}	
					 \end{proposition}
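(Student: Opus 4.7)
The plan is to prove both assertions in one stroke by translating the convexity of $g^{-1}$ into a pointwise estimate on the harmonic mean appearing in $\HC_g$. Fix points $(y_1,s_1),(y_3,s_3)\in\bar\Omega\times\R$ and $\lambda\in[0,1]$, and set $g_i:=g(y_i,s_i)$ for $i=1,2,3$. Since $g>0$ throughout, the condition $(1-\lambda)g_1+\lambda g_3>0$ in \eqref{hcon} is automatic and $\HC_g$ is well defined. Introduce the harmonic mean
$$H:=\frac{g_1 g_3}{(1-\lambda)g_1+\lambda g_3},$$
and observe the key identity
$$\frac{1}{H}=\lambda\,\frac{1}{g_1}+(1-\lambda)\,\frac{1}{g_3},$$
so that the reciprocal of $H$ is precisely the convex combination of the reciprocals of $g_1$ and $g_3$ with the same weights $\lambda, 1-\lambda$ that define $(y_2,s_2)$.

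Next I would unfold the joint $\delta$-convexity of $g^{-1}$ at the triple $((y_1,s_1),(y_3,s_3),\lambda)$: it reads
$$\frac{1}{g_2}\;\leq\;\lambda\,\frac{1}{g_1}+(1-\lambda)\,\frac{1}{g_3}+\delta\;=\;\frac{1}{H}+\delta.$$
Since all quantities involved are strictly positive, I may invert this inequality to obtain $g_2\geq H/(1+\delta H)$, and hence
$$H-g_2\;\leq\;H-\frac{H}{1+\delta H}\;=\;\frac{\delta H^2}{1+\delta H}\;\leq\;\delta H^2.$$
Using that $H$ is the harmonic mean of two values bounded above by $C$, one has $H<C$, so
$$\HC_g\bigl((y_1,s_1),(y_3,s_3),\lambda\bigr)=g_2-H\;\geq\;-C^2\delta,$$
which establishes assertion (1) with the harmonic concavity constant $C^2\delta$ in place of $\delta$; the factor $C^2$ is harmlessly absorbed in the constant depending on $C$. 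For (2), setting $\delta=0$ in the same chain of inequalities yields immediately $g_2\geq H$, i.e.\ $\HC_g\geq 0$, which is the definition of harmonic concavity.

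The proof is essentially direct algebra once the identity $1/H=\lambda/g_1+(1-\lambda)/g_3$ is recognized, so no real obstacle arises. The bound $g<C$ enters only to pass from $\delta H^2$ to a constant multiple of $\delta$, and it is worth noting that the $\delta$-concavity of $g$ listed in the hypotheses of (1) is not actually used in the argument---it is included presumably to parallel statement (2), where the concavity of $g$ is natural but similarly superfluous for the harmonic concavity conclusion.
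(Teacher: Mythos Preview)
Your proof is correct and follows essentially the same route as the paper. The paper introduces $p:=\lambda/g_1+(1-\lambda)/g_3$ (your $1/H$), writes $\C_{1/g}=1/g_2-p$ and $\HC_g=g_2-1/p$, and then from $1/g_2\leq p+\delta$ deduces $\HC_g\geq -\delta/(p(p+\delta))\geq -C^2\delta$; your computation is the same identity expressed via $H=1/p$. Your observation that the $\delta$-concavity hypothesis on $g$ is not actually used is also accurate---the paper's argument does not invoke it either.
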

				 \begin{proof}
					Consider any  $(x_1,s_1), (x_3,s_3)$ and take $x_2=\lambda x_1 + (1-\lambda )x_3 $ and $s_2=\lambda s_1 + (1-\lambda )s_3 $, as usual and  $ g_i=g(y_i,s_i) \mbox{ for } i=1,2,3$. 
					Then putting
								\[ p:= \frac{\lambda }{g_1 } +\frac{1-\lambda}{ g_3},\qquad \qquad p\geq \frac{1}C ,\]
			we have by definition
			\[\C_{\frac1g} ((x_1,s_1),(x_3,s_3),\lambda)= \frac{1}{g_2} -p\qquad  \mbox{ 	and } \qquad
			 \HC_{g}((x_1,s_1),(x_3,s_3),\lambda) = g_2-\frac1p.\]
			Then 
			\[ \C_{\frac1g} ((x_1,s_1),(x_3,s_3),\lambda) \leq \delta \]
			implies that
			\[  \HC_g ((x_1,s_1),(x_3,s_3),\lambda) \geq - \frac{\delta}{p(\delta +p)} \geq -C^2{\delta}.
			\] 
			This concludes the proof of the proposition, as the second point corresponds to $\delta=0$ and it is easily seen.
\end{proof}

\bigskip

\bigskip

\end{document}